\newtheorem{lemma}{Lemma}
\newtheorem{prop}{Proposition}
\newtheorem*{prop*}{Proposition}
\newtheorem{coroll}{Corollary}
\newtheorem*{theorem}{Theorem}
\newtheorem*{HowieConj}{Howie conjecture}
\newtheorem*{maintheorem}{Main theorem}
\theoremstyle{definition}
\newtheorem*{defin}{Definition}
\newtheorem{quest}{Question}
\newtheorem{rem}{Remark}
\newtheorem*{example}{Example}
\let\tilde\widetilde
\let\hat\widehat
\begin{document}
\title{On $p$-nonsingular systems of equations over solvable groups\footnote{
This work was supported by
the Russian Science Foundation, 
project no. 22-11-00075}}
\author{Mikhail A. Mikheenko
\\{\small
Faculty of Mechanics and Mathematics
of Lomonosov Moscow State University
}
\\{\small
Moscow Center for
Fundamental and Applied Mathematics}\\
{\normalsize mamikheenko@mail.ru}
}
\date{}
\maketitle
\begin{abstract}
Any group that has a subnormal series,
in which all factors are abelian
and all except the last one are $p'$-torsion-free,
can be embedded into a group
with a subnormal series of the same length,
with the same properties and such that
any $p$-nonsingular system of equations
over this group is solvable in this group itself.
This helps us to prove that the minimal order of a metabelian
group, over which there is a unimodular equation that is
unsolvable in metabelian groups,
is $42$.
\end{abstract}

\setcounter{section}{-1}
\section{Introduction}
In this article $\mathbb Z_p$ for a prime $p$ denotes the field with $p$ elements, 
$(\mathbb Z_p)^*$ denotes the group of its invertible elements.
A cyclic group of order $n$ is denoted as $\langle a \rangle_n$,
$\langle b \rangle_n$, $\langle g \rangle_n$ etc.
An infinite cyclic group is denoted as
$\langle g \rangle_\infty$, $\langle x_1\rangle_\infty$ etc.
The Cartesian
(or unrestriscted direct)
product of groups $\{G_i\}_{i \in I}$
is denoted by $\prod\limits_{i \in I} G_i$,
whereas their direct (restricted) product is denoted as $\underset{i\in I}{\times} G_i$.
The group ring of a group
$G$ with a coefficient ring $R$ is denoted by $R G$.

Here we call a group $G$ an extension of a group $A$ by a group $B$ if
$G$ has the normal subgroup $A$ and the quotient group
$G/A$ is isomorphic to $B$.
For elements $g$ and $h$ of a group $G$
the expression $g^h$ denotes $h^{-1}gh$ and the expression
$g^{nh}$, where $n$ is an integer (may be negative),
denotes $h^{-1}g^nh$.
Also, $[g,h]$ denotes $g^{-1}h^{-1}gh$.
The Cartesian (or unrestricted) wreath product $A \bar \wr B$
is viewed as
$\left(\prod\limits_{b \in B} A_b\right)\leftthreetimes B$,
in which $A_b$ are copies of $A$
and $B$ acts on $\prod\limits_{b \in B} A_b$ such that
$\left((a_b)_{b\in B}\right)^{b_1}=
(a_b)_{bb_1\in B}=
(a_{bb_1^{-1}})_{b\in B}$
if $b_1 \in B$ and
$a_b \in A_b$ for any $b\in B$.

The paper is devoted to equations and systems of equations over groups.

\begin{defin}
Let $G$ be a group.
An equation in variables $x_1,\ldots,x_n$
over $G$ is an expression
$w(x_1,\ldots,x_n)=1$, where $w$ is an element of the free product
$G*F(x_1,\ldots,x_n)$,
in which $F(x_1,\ldots,x_n)$ is the free group
with basis $x_1,\ldots,x_n$.

The equations $w(x_1,\ldots,x_n)=1$ is {\it solvable} in the group
$\tilde G$ if $\tilde G \supset G$and $\tilde G$ contains a
solution of this equations (i.e. there are elements
$\tilde g_1,\ldots,\tilde g_n \in \tilde G$ such that $w(\tilde g_1,\ldots, \tilde g_n)=1$).
The group $\tilde G$ is called a {\it solution group}.
Equivalently, $w=1$ is solvable in $\tilde G$
if there is a homomorphism $G*F(x_1,\ldots,x_n) \to \tilde G$
which is injective on $G$ and sends $w$ to $1$.

If $w=1$ is solvable in some group $\tilde G$,
we say that the equation $w=1$ over $G$ is {\it solvable}.

The solvability of a (finite or infinite) system of equations
(possibly in an infinite set of variables)
over a group is defined likewise.
\end{defin}

It is easy to see that the system 
$\{w_j=1\}_{j \in J}$ of equations in variables $\{x_i\}_{i \in I}=X$
over $G$ is solvable if and only if
$G \cap \langle \langle W \rangle \rangle=\{1\}$
in $G*F\left(X\right)$, where
$W$ is the set $\{w_j\}_{j \in J}$
and $\langle \langle W \rangle \rangle$ is its normal closure
in $G*F\left(X\right)$.
It follows that,
if every system's finite subsystem
is solvable, then
the systems itself is solvable.

\begin{defin}
Let $\{w_j = 1\}_{j \in J}$ be
a system of equations in variables $\{x_i\}_{i \in I}=X$
over a group $G$.
Consider the free $\mathbb Z$-module
$\displaystyle \sum\limits_{i \in I}\mathbb Z \cdot x_i$
with basis $X$. It can also be viewed as a free abelian group in additive notation.
There is a homomorphism (which sends group multiplication to addition in module)
$G*F\left(X\right) \to \displaystyle \sum\limits_{i \in I}\mathbb Z \cdot x_i$
which sends $G$ to 0 and $x_i \in X$ to $x_i$.
Let us call this homomorphism a trivialization.

Let $m_j$ be the image of $w_j$ under trivialization.
$m_j$ can be viewed as a finitely supported
(i. e. having only a finite number of non-zero coordinates)
row of exponent sums of the variables
$x_i$ in $w_j$.
The system $\{w_j=1\}$ is called {\it non-singular},
if elements $m_j$ are independent over $\mathbb Z$,
i. e. i.e. there is no combination of these rows with coefficients from
$\mathbb Z$ which is equal to zero, except the combination
where all the coefficients are zero
(equivalently: if they are linearly independent over
$\mathbb Q$ as elements of the vector space
$\displaystyle\sum\limits_{i \in I}\mathbb Q \cdot x_i$).

For a prime number $p$
consider likewise a
$p$-trivialization,
i. e. the homomorphism
$G*F\left(X\right) \to \displaystyle \sum\limits_{i \in I}\mathbb Z_p \cdot x_i$.
The system is called {\it $p$-nonsingular}
if the images of the words $w_j$ under $p$-trivialization
(i.e. rows of exponent sums of variables in $w_j$
modulo $p$)
are linearly independent over
$\mathbb Z_p$.
The system is called {\it unimodular}
if it is $p$-nonsingular for every prime $p$.

In particular, an equation $w(x)=1$ in one variable is
non-singular ($p$-nonsingular, unimodular) if the exponent sum of $x$ in $w$
is non-zero (is not divisible by $p$, is equal to $\pm 1$ respectively).
\end{defin}

Clearly, if a system of equations is $p$-nonsingular for some prime $p$,
then it is non-singular as well. Also, a unimodular system
is the same as a $p$-nonsingular system for every prime $p$, therefore,
in particular,
unimodular systems are non-singular.

\begin{example}
The system of equations
$$
\left\{
\begin{array}{l}
[x,y]x^2 g_1 y^{-3}=1,  \\ \relax
[y,z]z=1, \\
xg_2yg_3z=1 \\
\end{array}
\right.
$$
has the following exponent sums rows organized into a matrix:
$$
\begin{pmatrix}
2 & -3 & 0\\
0 & 0 & 1\\
1 & 1 & 1
\end{pmatrix}.
$$
The determinant of this matrix is $-5$, so the system
is non-singular, $2$-nonsingular, $3$-nonsingular,
but is $5$-singular, so it is not unimodular.
\end{example}

Non-singular systems of equations over
groups from many classes are solvable.
For example, one of the classical results on equations over groups
concerns finite groups:

\begin{theorem}[\cite{GR62}]
A finite non-singular system of equations over a finite group
$G$ is solvable. Moreover,
a solution group can be chosen to be finite.
\end{theorem}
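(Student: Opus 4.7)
The plan is to follow the classical strategy of Gerstenhaber and Rothaus: embed $G$ into a compact connected Lie group, use topological degree to locate a solution of the word map there, and then descend to a finite group via residual finiteness of linear groups.

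First, since every finite group admits a faithful unitary representation, I fix an embedding $G \hookrightarrow U(n)$ for some $n$ and henceforth view the coefficients of the system as elements of $U(n)$. Next, I reduce to a square system: non-singularity forces the number $m$ of equations to be at most the number $k$ of variables, and the $m \times k$ exponent-sum matrix contains an invertible $m \times m$ minor. Setting the variables outside this minor equal to $1$ yields a system of $m$ equations in $m$ variables whose exponent matrix is still non-singular, and any solution of it extends (by those identity elements) to a solution of the original. Hence I may assume $m = k$.

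The heart of the proof is the evaluation of the Brouwer degree of the word map $F\colon U(n)^k \to U(n)^k$ given by $F(x_1,\ldots,x_k) = (w_1(x_1,\ldots,x_k),\ldots, w_k(x_1,\ldots,x_k))$. Following Gerstenhaber and Rothaus, one restricts $F$ to the maximal torus $T^k \subset U(n)^k$, where each $w_j$ becomes a character determined by its exponent sums; a standard Lie-theoretic argument then yields that $\deg F$ is a non-zero integer multiple of a power of $\det A$, where $A$ is the $k \times k$ exponent-sum matrix. Non-singularity gives $\det A \neq 0$, so $\deg F \neq 0$ and $F$ is surjective. In particular, the identity of $U(n)^k$ lies in the image, producing elements $\tilde g_1,\ldots,\tilde g_k \in U(n)$ that solve the system.

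To upgrade this to a \emph{finite} solution group, observe that $H = \langle G, \tilde g_1,\ldots,\tilde g_k \rangle \leq U(n)$ is a finitely generated linear group over $\mathbb{C}$, and hence residually finite by Malcev's theorem. Pick a normal subgroup $N \trianglelefteq H$ of finite index with $N \cap G = \{1\}$; then $G$ embeds into the finite group $H/N$, and since any homomorphism carries solutions to solutions, the images of the $\tilde g_i$ still satisfy the system in $H/N$, providing the required finite solution group. The main obstacle is the degree computation in the central step: reducing to the maximal torus and extracting a non-zero factor from $\det A$ is the whole content of the argument and needs genuine Lie-theoretic input, whereas the linearization, reduction to the square case, and Malcev-style descent are essentially formal once that calculation is in hand.
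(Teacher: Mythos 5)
The paper does not prove this statement at all: it is quoted verbatim from Gerstenhaber--Rothaus \cite{GR62} as background, so there is no internal proof to compare yours against. Your outline is a faithful reconstruction of the standard argument: unitarize $G$ via a faithful representation, pass to a square subsystem using an invertible $m\times m$ minor of the exponent matrix (setting the remaining variables to $1$), show the word map $F\colon U(n)^k\to U(n)^k$ has non-zero degree and hence is surjective, and then descend to a finite quotient of the finitely generated linear group $\langle G,\tilde g_1,\dots,\tilde g_k\rangle$ via Malcev residual finiteness, choosing the finite-index normal subgroup to meet the finite group $G$ trivially. All of these reductions are sound.

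The one caveat is that the entire mathematical content sits in the step you explicitly black-box: the computation that $\deg F\neq 0$. As written (``a standard Lie-theoretic argument then yields that $\deg F$ is a non-zero integer multiple of a power of $\det A$'') this is an appeal to authority rather than a proof; the actual computation shows $\deg F=(\det A)^n$ up to sign, obtained by homotoping the coefficients to the identity (possible since $U(n)$ is connected), reducing the resulting pure power map to the maximal torus $T^k\subset U(n)^k$, and comparing induced maps on top cohomology. If you intend this as a complete proof rather than a summary of \cite{GR62}, that degree calculation must be supplied; everything surrounding it is correct and essentially formal, as you yourself note.
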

This theorem was repeatedly generalized, see
\cite{P08,KT17,T18,NT22,KM23}.

The following conjecture is as yet
neither proved nor disproved:

\begin{HowieConj}[\cite{How81}]
Any non-singular system of equations over any group is solvable.
\end{HowieConj}

As any system of equations over a group is solvable
if and only if every finite subsystem of this system is solvable,
it suffices to consider only finite systems
in order to check whether the conjecture is true.
For example, the result from \cite{GR62} states that the conjecture is true
for finite groups.
However, a solution group for an infinite system may not be finite.

Besides finite groups, this conjecture
is true for locally indicable groups as well.

\begin{defin}
A {\it locally indicable group} is a group
whose any non-trivial finitely generated subgroup
admits a surjective homomorphism onto $\langle g \rangle_\infty$.
\end{defin}

\begin{defin}
A {\it locally $p$-indicable group} for a prime $p$ is a group
whose any non-trivial finitely generated subgroup
admits a surjective homomorphism onto  $\langle g \rangle_p$.
\end{defin}

Note that locally indicable groups are
locally $p$-indicable for any prime $p$.
Also, it is easy to see that, if a group is locally $p$-indicable
for an arbitrarily large prime $p$, then it is locally indicable.
This can be seen by considering a quotient group of a finitely
generated subgroup by the commutator subgroup of this subgroup.

\begin{theorem}[\cite{How81}]
A finite non-singular system of equations over a locally indicable group
is solvable.
\end{theorem}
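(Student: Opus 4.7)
The plan is to follow Howie's original strategy, building a solution group by a tower of extensions whose injectivity is controlled by the local indicability. First, I would reduce to the finitely generated case: since the system is finite, only finitely many elements of $G$ appear as coefficients, so it suffices to find a solution group for the subgroup $H \le G$ they generate (the amalgamated product $G *_H \tilde H$ then solves the system over $G$). If $H$ is trivial, setting every variable to $1$ satisfies the system, so we may assume $H$ is nontrivial and, invoking local indicability, choose a surjection $\varphi\colon H \twoheadrightarrow \langle t\rangle_\infty$. The kernel $N = \ker \varphi$ is again locally indicable, and because $\langle t \rangle_\infty$ is free the extension splits as $H = N \rtimes \langle t\rangle_\infty$.

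Next, I would exploit non-singularity to put the system into a convenient normal form. The exponent-sum matrix of the system is an $n \times k$ integer matrix of rank $n$, so after possibly introducing redundant variables one obtains a square invertible matrix over $\mathbb Q$. Row operations on the equations (Tietze replacements by products of equations) and invertible integer substitutions $x_i \mapsto \prod_j x_j^{a_{ij}}$ preserve both solvability and the ambient group, and allow one to arrange the system so that the $i$-th equation has a distinguished variable of nonzero exponent sum, while the other distinguished variables have exponent sum zero there. This sets up an induction on the number of equations.

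For the base case of a single non-singular equation $w(x_1,\ldots,x_k) = 1$ over $H$, with $x_1$ having nonzero exponent sum, I would form the natural relative presentation $\tilde H = \langle H, x_1, \ldots, x_k \mid w \rangle$ and show that the canonical map $H \to \tilde H$ is injective. This is where the weight test enters: one analyses reduced spherical pictures over the relative presentation, assigns each corner a weight derived from $\varphi$ restricted to the coefficients, and uses local indicability of $N$ (applied to the smaller finitely generated subgroup generated by coefficients of the $t$-slices of $w$) to rule out any such picture via a descent argument.

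The main obstacle is precisely this injectivity step in the base case; everything else is essentially bookkeeping around it. A weaker hypothesis than local indicability — say, merely that $H$ itself maps onto $\mathbb Z$ — would not work, because the weight-test descent must be iterated on subgroups like $N$ and the subgroup generated by the $t$-slices of coefficients, and the property has to persist through each restriction. The technical heart of the argument is therefore showing that reduced pictures over the relative presentation cannot exist, and this is where Howie's combinatorial analysis of curvature and weights does the real work.
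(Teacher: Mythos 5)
This statement is quoted in the paper as a known background result from \cite{How81}; the paper itself contains no proof of it, so your attempt can only be judged on its own merits. As it stands, it is an outline of a strategy rather than a proof, and the two places where all the mathematical content of Howie's theorem lives are exactly the places you leave blank or set up incorrectly.

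First, the induction on the number of equations does not close as described. To solve even the first equation you must adjoin \emph{all} the variables $x_1,\dots,x_k$ (they all occur in $w_1$ in general), so after forming $\tilde H=\langle H,x_1,\dots,x_k\mid w_1\rangle$ the remaining words $w_2,\dots,w_n$ are no longer equations in fresh unknowns over $\tilde H$ --- they are specific elements of $\tilde H$ that you now want to kill, which is a different (and generally impossible) problem. Any one-equation-at-a-time induction of this shape needs, at minimum, the closure property that adjoining a root of a non-singular equation to a locally indicable group again yields a locally indicable group (the Brodski\u{\i}--Howie theorem on one-relator products of locally indicable groups), together with a careful bookkeeping of which variables are adjoined freely at which stage; you state neither. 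Second, the base case --- injectivity of $H\to\langle H,x_1,\dots,x_k\mid w\rangle$ for a single equation with nonzero exponent sum --- \emph{is} the theorem, and you dispose of it with an appeal to ``reduced spherical pictures'' and ``the weight test.'' That is not only a gap but points at the wrong tool: the weight test is a curvature condition on specific relative presentations and does not apply to an arbitrary non-singular relator; Howie's actual argument (and Brodski\u{\i}'s) is a tower/Magnus-style induction carried out in the infinite cyclic cover determined by $\varphi$, where the relator is rewritten over the kernel and one inducts on its length. Your preliminary reductions (passing to the finitely generated coefficient subgroup via an amalgamated product, and triangularizing the exponent matrix by row operations and invertible substitutions) are correct and standard, but everything after them needs to be actually carried out.
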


There is a similar result for $p$-nonsingular systems.

\begin{theorem}[\cite{Kr85}]
Let $p$ be a prime.
A finite $p$-nonsingular system of equations over a locally $p$-indicable group
is solvable.
\end{theorem}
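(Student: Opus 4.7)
The plan is to imitate Howie's proof of the previous theorem, replacing the surjections $G_0\twoheadrightarrow\langle g\rangle_\infty$ provided by local indicability with the surjections $G_0\twoheadrightarrow\langle g\rangle_p$ provided by local $p$-indicability. The argument splits naturally into a reduction from a system of $k$ equations to a single equation, and the treatment of that single $p$-nonsingular equation.

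For the reduction, the $k\times n$ exponent-sum matrix of a $p$-nonsingular system has rank $k$ modulo $p$. Standard row-operation and change-of-variable arguments, identical in form to those used in the locally indicable case, allow one to peel off equations one at a time and reduce the problem to a single equation in which the exponent sum of some variable is coprime to $p$---provided that at each step the extension of $G$ constructed so far remains locally $p$-indicable, so that the induction can be carried out inside the prescribed class.

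For a single $p$-nonsingular equation $w(x_1, \ldots, x_n) = 1$ with exponent sum of $x_1$ equal to $m\not\equiv 0\pmod p$, let $G_0\leq G$ be the subgroup generated by the coefficients of $w$, and use local $p$-indicability to pick a surjection $\varphi\colon G_0\twoheadrightarrow \langle g\rangle_p$. Extend $\varphi$ trivially outside $G_0$ to define a $\mathbb Z_p$-valued weight on the free product $G\ast F(x_1, \ldots, x_n)$, and assign $x_1$ the unique degree in $\mathbb Z_p$ that makes $w$ homogeneous of degree zero, which is possible because $m$ is invertible modulo $p$. The equation group $E = \langle G, x_1, \ldots, x_n \mid w\rangle$ then inherits a $\mathbb Z_p$-grading, and can be built as an ascending union of HNN-type extensions of $G$, each preserving injectivity onto the previous stage.

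The hard part is the injectivity of $G\hookrightarrow E$ in the single-equation case. In Howie's argument the $\mathbb Z$-valued weight allows one to isolate a minimal-weight summand of a hypothetical nontrivial relation in $G$ and contradict it using the Freiheitssatz. Over $\mathbb Z_p$ there is no useful order, so the minimality argument must be replaced by a finite combinatorial analysis of the $p$ homogeneous layers of the grading, relying only on the invertibility of $m$ modulo $p$. This is the technical core of the cited paper; everything else is routine packaging.
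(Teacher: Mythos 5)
First, a point of reference: the paper does not prove this statement at all --- it is quoted as a known theorem of Krsti\'c \cite{Kr85} --- so there is no internal proof to compare yours against, and your attempt has to stand on its own.

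It does not. What you have written is a plan, not a proof, and the gap is one you name yourself. The entire content of the theorem in the single-equation case is the injectivity of $G\hookrightarrow E=\langle G, x_1,\ldots,x_n \mid w\rangle$ when the exponent sum of some variable is prime to $p$; this is the $\bmod\ p$ analogue of the Brodskii--Howie generalized Freiheitssatz, and it is precisely the step you leave open. Observing that Howie's argument isolates a minimal-weight term of a $\mathbb Z$-grading and that $\mathbb Z_p$ carries no useful order correctly identifies the obstacle, but then asserting that the minimality argument ``must be replaced by a finite combinatorial analysis of the $p$ homogeneous layers \ldots{} the technical core of the cited paper'' is an admission that the core of the proof is missing, not a substitute for it. (It is also not clear that such a layer-by-layer analysis is even the right replacement; the difficulty is genuine, which is why this was a separate Inventiones paper and not a remark appended to Howie's.) The reduction from $k$ equations to one has the same defect in miniature: your proviso that ``the extension of $G$ constructed so far remains locally $p$-indicable'' is itself a nontrivial theorem --- the analogue, in Howie's setting, of the local indicability of torsion-free one-relator products --- and you assume it rather than prove it. So both halves of the argument rest on unestablished claims, and the proposal cannot be accepted as a proof.
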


The following result shows that more is known about unimodular
equations than about non-singular ones.

\begin{theorem}[\cite{K93}]
Any unimodular equation over a torsion-free group
is solvable.
\end{theorem}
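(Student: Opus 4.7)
I would follow Klyachko's ``cars on a sphere'' argument. First, by replacing $x$ with $x^{-1}$ if necessary, one may assume the exponent sum of $x$ in $w$ equals $+1$; the task is to show that $G$ embeds into $\hat G := (G * \langle x \rangle_\infty) / \langle\langle w(x) \rangle\rangle$. Suppose toward a contradiction that some nontrivial $g \in G$ dies in $\hat G$. By van Kampen's lemma this yields a reduced disk diagram $\Delta$ over the presentation, whose interior $2$-cells are labelled by conjugates of $w^{\pm 1}$ and whose boundary cycle reads $g$. Gluing one extra outer face labelled by $g$ turns $\Delta$ into a spherical diagram $\Sigma$ on $S^2$, and I would choose $\Sigma$ to minimize, say, the number of $w$-faces and then the total number of edges.

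The next step is the combinatorial heart of the argument. On each $w$-face of $\Sigma$ one places a ``car'' that drives around the face boundary at unit speed. Because $w$ has $x$-exponent sum $+1$, the algebraic count of $x$-crossings per car is $1$, and the orientation convention can be arranged so that every edge of $\Sigma$ labelled by $x^{\pm1}$ carries two cars, one from each adjacent face, traversing that edge from opposite sides. An Euler-characteristic calculation on $S^2$ then shows that the trajectories cannot avoid a \emph{head-on collision} at some interior vertex $v$: the total rotation index contributed by the cars is forced by $\chi(S^2)=2$, and this cannot be realised if no two cars ever meet nose-to-nose.

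Finally, a head-on collision at $v$ is translated into an algebraic identity in $G$. Reading the $G$-labels around $v$ along the two colliding arcs, between the moments when the two cars passed through $v$, one obtains two $G$-words which must represent the same element of $\hat G$; minimality of $\Sigma$ and torsion-freeness of $G$ can be combined to rule this out, because the collision pattern forces the relation to produce a nontrivial root of the identity in $G$. This contradicts the torsion-free hypothesis and so proves the embedding.

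I expect the main obstacle to be the middle step: setting up the car-placement convention on $\Sigma$ precisely enough that (a) every $x$-edge gets exactly two cars approaching from opposite sides, and (b) the Euler-characteristic count on $S^2$ truly forces a head-on collision at an \emph{interior} vertex rather than only tangential encounters. Once that combinatorial mechanism is in place, both the reduction to a minimal spherical diagram and the final algebraic contradiction from the collision are essentially bookkeeping.
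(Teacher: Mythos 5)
First, a caveat: the paper does not prove this statement at all --- it is quoted verbatim from Klyachko's paper [K93] as a known result, so there is no internal proof to compare against. Your outline is a recognisable and essentially correct description of the route Klyachko actually takes (spherical diagram with one exceptional $g$-face, cars circulating on the $w$-faces, a sphere-specific collision lemma, and an algebraic contradiction with torsion-freeness extracted from a collision).

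As a proof, however, it has a genuine gap, and it sits exactly where you placed the weight incorrectly. The ``car crash lemma'' (Klyachko's funny property of the sphere: if a car drives forever around the positively oriented boundary of each face of a tessellated $S^2$, there are at least \emph{two} complete collisions) is the entire content of the theorem, and your sketch only asserts that ``an Euler-characteristic calculation forces a head-on collision.'' Note also that the cars traverse \emph{all} edges of their faces, not only the $x$-edges; opposite traversal of a shared edge is automatic from coherent orientation of $S^2$, and the subtle point is the timing of the cars, not their routes. You also need \emph{two} collisions precisely because one of them may occur on the boundary of the exceptional $g$-face, where it yields no information. Finally, the closing step is not ``essentially bookkeeping'': it is where both hypotheses are consumed. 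The parametrisation of each car is chosen using the fact that the $x$-exponent sum of $w$ is exactly $\pm 1$ (not merely nonzero), so that a complete collision at an interior vertex produces an equation of the form $u^n=1$ with $u\in G$ nontrivial and $n>1$, contradicting torsion-freeness. Your sketch does not locate where unimodularity, as opposed to mere non-singularity, enters --- and it must enter somewhere essential, since (as the paper notes) the analogous statement for non-singular equations over torsion-free groups is open. Until the collision lemma is proved and the car schedules are specified concretely enough to extract that torsion element, this remains a plan rather than a proof.
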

This result was generalized in \cite{K06}.
It is unknown if an anologous result is true
for non-singular equations.

Some results are connected not only with the solvability of
systems of equations over groups but also with
the solvability of systems in groups of the same class.
Aside from the already mentioned result from \cite{GR62} for the class of finite groups
there is a long-ago obtained result for the class of nilpotent groups.
\begin{defin}
Let $p$ be a prime.
A group $G$ is called {\it $p'$-torsion-free} if any non-identity element of $G$
has either infinite order or order equal to a power of $p$.
\end{defin}

\begin{theorem}[\cite{Sh67}]
If
\begin{itemize}
\item $G$ is a divisible nilpotent torsion-free group and there is
a finite non-singular system of equations over $G$;
\item $G$ is nilpotent and
there is a finite unimodular system of equations over $G$;
\item or $G$ is a $p'$-torsion-free nilpotent group in which
any element is a $q$th power for any prime
number $q$ not equal to $p$
(so $G$ is $p'$-divisible)
and there is a finite $p$-nonsingular system of equations over $G$;
\end{itemize}
then the system is solvable in $G$, and the solution in $G$ is unique.
\end{theorem}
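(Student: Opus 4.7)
The plan is to induct on the nilpotency class $c$ of $G$, using the last non-trivial term of the lower central series as a central abelian subgroup to reduce the nilpotency class.

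\emph{Base case $c=1$.} Here $G$ is abelian, so writing $G$ additively each equation $w_j(x_1,\ldots,x_n)=1$ collapses to $\sum_i m_{j,i}x_i = c_j$, with $m_{j,i}$ the exponent sum of $x_i$ in $w_j$ and $c_j\in G$ determined by the coefficients, and the whole system becomes a single matrix equation $M\vec x = \vec c$ over $G$. In case~(1), $G$ is a $\mathbb Q$-vector space and $M$ is invertible over $\mathbb Q$ by non-singularity; in case~(2), $G$ is a $\mathbb Z$-module and $M$ is $\mathbb Z$-invertible since $\det M=\pm 1$; in case~(3), $G$ is a $\mathbb Z_{(p)}$-module (as it is abelian, $p'$-torsion-free, and $p'$-divisible) and $M$ is $\mathbb Z_{(p)}$-invertible since $p\nmid \det M$. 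In each case the unique solution exists in $G$.

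\emph{Inductive step.} Put $Z=\gamma_c(G)$, a central abelian subgroup of $G$. Granting that $G/Z$ and $Z$ each inherit the ambient divisibility/torsion hypotheses (the main technical point, discussed below), induction applied to $G/Z$ yields a unique solution $\bar x_i\in G/Z$. Lift to $\hat x_i\in G$; the residues $w_j(\hat x_1,\ldots,\hat x_n)$ then lie in $Z$. Seeking a true solution in the form $x_i=\hat x_i z_i$ with $z_i\in Z$, centrality of $Z$ gives
$$
w_j(\hat x_1 z_1,\ldots,\hat x_n z_n) \;=\; w_j(\hat x_1,\ldots,\hat x_n)\,\prod_i z_i^{m_{j,i}},
$$
so the system on the $z_i$ is a linear system $M\vec z=\vec d$ over $Z$, uniquely solvable by the base case applied to $Z$. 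Combining uniqueness modulo $Z$ with uniqueness in $Z$ yields uniqueness in $G$.

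The principal technical obstacle is the inheritance of the hypotheses by $Z$ and $G/Z$. For case~(1) the Mal'cev correspondence with nilpotent $\mathbb Q$-Lie algebras makes all $\gamma_i(G)$ and all quotients $G/\gamma_i(G)$ again divisible, nilpotent, and torsion-free. Case~(2) needs no inheritance beyond $Z$ being abelian. Case~(3) is the most delicate: showing $\gamma_c(G)$ is $p'$-divisible when $G$ is reduces via the Hall--Petresco formula to the fact that a commutator of $q$-th powers (for $q\neq p$ prime) is itself a $q$-th power of a commutator modulo deeper commutators, while preservation of $p'$-torsion-freeness upon passing to $G/\gamma_c(G)$ amounts to checking that $\gamma_c(G)$ is $p'$-isolated in $G$, which is a standard Mal'cev-style property of $p'$-torsion-free nilpotent groups.
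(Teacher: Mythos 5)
The paper does not actually prove this theorem: it is quoted from Shmel'kin's paper \cite{Sh67} as a known result, so there is no in-paper argument to compare yours against. Your proof is the standard one and is correct in outline: collapse each central abelian layer to a linear system $M\vec x=\vec c$ over a module on which $M$ acts invertibly (a $\mathbb Q$-vector space in case (1), a $\mathbb Z$-module with $\det M=\pm1$ in case (2), a $\mathbb Z_{(p)}$-module with $p\nmid\det M$ in case (3)), and climb the lower central series. Two remarks. First, the uniqueness assertion forces the reading that the system has exactly as many equations as variables --- a non-singular system with fewer equations than unknowns never has a unique solution, already over the trivial group --- and your argument tacitly assumes $M$ is square; that is the intended reading, but it is worth saying explicitly. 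Second, as you acknowledge, in cases (1) and (3) the entire weight of the proof sits in the inheritance lemmas, which you assert rather than prove: that $\gamma_c(G)$ is again ($p'$-)divisible, and that it is ($p'$-)isolated so that $G/\gamma_c(G)$ stays ($p'$-)torsion-free. These are genuine theorems of nilpotent group theory (divisibility propagates down the lower central series via the collection process, and $q$-th roots are unique in nilpotent groups without $q$-torsion, which yields isolatedness once divisibility of $\gamma_c(G)$ is known), and your pointers to Hall--Petresco and Mal'cev are the correct ones; so this is not a gap in the logic so much as an honest delegation to the literature --- which is precisely what the paper itself does by citing \cite{Sh67}.
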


This implies that, for example, any non-singular system
over a nilpotent torsion-free group is solvable in
the divisible hull of the group,
which is a nilpotent group of the same nilpotency class.

\cite{KMR24} studied a similar question for the class
of solvable groups: when a non-singular system
over a solvable group has a solution in a solvable group as well
and what can be said about derived length of the solution group?

\begin{prop*}[\cite{KMR24}]
There exists a metabelian group $G$ with a unimodular equation $w(x)=1$ over it such
that the equation $w(x)=1$ has no solutions in metabelian groups.

Moreover, $G$ can be chosen to be:
\begin{itemize}
\item either finite (of order $42$)
\item or torsion-free and such that any unimodular equation
(and even any non-singular system) over $G$
has a solution in the class of solvable groups of derived length $3$.
\end{itemize}
\end{prop*}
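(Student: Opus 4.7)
My plan is to treat the two alternatives separately but by a common strategy: design a unimodular equation $w(x)=1$ whose solvability in any metabelian overgroup $\tilde G\supseteq G$ would force a relation inside the abelian subgroup $\tilde G'$ that is incompatible with the structure inherited from $G$.

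For the finite case, I would take $G$ to be the (unique) non-abelian metabelian group of order $42$, realized as the affine group $\mathrm{Aff}(\mathbb{F}_7)\cong\langle a\rangle_7\rtimes\langle b\rangle_6$ with $a^b=a^3$ (the exponent $3$ being a primitive root modulo $7$). A candidate unimodular equation is a short word built from $x$, its conjugates by powers of $b$, and constants $a^i b^j$, with exponent sum of $x$ equal to $\pm 1$. In any hypothetical metabelian solution group $\tilde G$, reducing $w(\tilde x)=1$ modulo $\tilde G'$ pins down the coset $\tilde x\tilde G'$ uniquely---this is exactly where unimodularity is used, because the exponent sum is then invertible modulo $\tilde G'$. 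Substituting back yields a relation in $\tilde G'$ among conjugates of $a$ by powers of $b$; since $b$ acts on $\langle a\rangle_7$ by multiplication by a primitive root, the coefficients of $w$ can be chosen so that this relation collapses to a nonzero integer combination of powers of $3$ modulo $7$, a contradiction.

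For the torsion-free alternative, I would take $G$ to be a torsion-free metabelian group equipped with a characteristic-zero analog of the same word---for instance, a suitable subgroup of $\langle t\rangle_\infty\bar\wr\langle s\rangle_\infty$, or its quotient by an appropriate $\mathbb Z[s,s^{-1}]$-submodule of the base, designed so that conjugation by $s$ plays the role that conjugation by $b$ played in the finite case. Unsolvability in metabelian groups follows by the same module-theoretic mechanism: the equation reduces to a linear identity in $\tilde G'$ viewed as a $\mathbb Z[\tilde G/\tilde G']$-module, which fails in the specific module attached to $G$. The ``moreover'' clause---that every non-singular system over $G$ is solvable in a solvable group of derived length $3$---is where I would invoke the main embedding theorem of the current paper: $G$ has a subnormal abelian series $1\triangleleft A\triangleleft G$ with $A$ torsion-free, hence $p'$-torsion-free for every $p$, so the theorem embeds $G$ into a metabelian group in which any $p$-nonsingular system is solvable; combining such embeddings across the finitely many primes dividing the determinant of a given non-singular system (or iterating once more) produces a derived-length-$3$ overgroup in which the system is solvable.

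The main obstacle is the construction of the word $w(x)$: one needs simultaneously (i) unimodularity, so that $w=1$ has a chance of being solvable over some overgroup at all, and (ii) a module-theoretic obstruction strong enough to rule out every metabelian solution. The tension between these two requirements is exactly what forces the use of a cyclic order-$6$ action on $\mathbb F_7$ in the finite case (and an analogous faithful $\mathbb Z$-action in the torsion-free case). Verifying (ii) reduces to an explicit Fox-derivative computation inside the free metabelian group, which is routine in principle but delicate in practice. A secondary step, for the torsion-free alternative, is to confirm that the constructed $G$ actually falls into the hypotheses of the main embedding theorem and that the final derived length does not exceed $3$.
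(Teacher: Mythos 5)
This proposition is quoted from \cite{KMR23}; the present paper gives no proof of it. The closest in-paper model is the Example embedded in the remarks after the main theorem, where the obstruction argument is carried out in full for the groups $\langle c\rangle_2\wr(\langle a\rangle_p\times\langle b\rangle_q)$. Measured against that model, your finite case names the right group ($\mathrm{Aff}(\mathbb F_7)$ of order $42$; incidentally it is not the unique non-abelian metabelian group of that order) and the right general mechanism, but stops exactly at the decisive step. Reducing modulo $\tilde G'$ and ``substituting back'' yields a relation $\tilde x^{\mu}=g$ inside the abelian group $\tilde G'$, with $\mu\in\mathbb Z[\tilde G/\tilde G']$, $g\in G'$, and $\tilde x$ an \emph{unknown} element of an \emph{unknown} overgroup; this does not ``collapse to a nonzero integer combination of powers of $3$ modulo $7$,'' because $\tilde x$ need not lie in $\langle a\rangle_7$ and $\tilde G/\tilde G'$ need not be $\langle b\rangle_6$. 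One must eliminate $\tilde x$: exhibit $\alpha,\beta\in\mathbb Z[\tilde G/\tilde G']$ with $\mu\alpha=\mu\beta$ --- an identity forced by relations already valid in $G$ (such as $b^6=1$), hence valid in every metabelian overgroup --- while $g^{\alpha}\neq g^{\beta}$ in $G$. That is what the $(1+ab)$ versus $(a+b)$ computation achieves in the paper's Example; without writing down $w$ and such a pair, nothing is established.

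The torsion-free alternative is where the plan genuinely fails. Any torsion-free metabelian $G$ with a subnormal series $G\triangleright A\triangleright\{1\}$ in which $A$ is abelian and $G/A$ is torsion-free abelian falls under the theorem of \cite{KMR23} quoted in the introduction (strengthened in Remark \ref{strongrem}): every non-singular system over it, in particular every unimodular equation, is solvable in a metabelian group, so such a $G$ cannot be a counterexample. A subgroup of $\langle t\rangle_\infty\bar\wr\langle s\rangle_\infty$, or a quotient by a $\mathbb Z[s^{\pm1}]$-submodule of the base that keeps the group torsion-free, is of exactly this kind, so ``an analogous faithful $\mathbb Z$-action'' cannot replace the order-$6$ action. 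As the paper's remark on the failure of the two-prime analogue makes explicit, the obstruction requires the relevant abelian quotient of $G$ to carry torsion at two distinct primes simultaneously: unimodularity is $p$-nonsingularity for every single prime $p$, and the main theorem disposes of any obstruction concentrated at one prime. Finally, ``combining embeddings across the finitely many primes dividing the determinant'' is not a meaningful operation (and is undefined for infinite systems); the derived-length-$3$ claim is obtained by exhibiting a length-$3$ subnormal series of $G$ with torsion-free non-final abelian factors and applying the rational analogue of the embedding theorem once.
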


\begin{theorem}[\cite{KMR24}]
Let a group $G$ have a subnormal series
$$G=G_1\triangleright G_2 \triangleright \ldots \triangleright G_{n} \triangleright G_{n+1}=\{1\}$$
in which all factors are abelian and all factors except the last one are torsion-free.
Then any (finite or infinite)
non-singular system of equations over $G$
has a solution in some group $\tilde G \supset G$
which also has a subnormal series
$$\tilde G= \tilde G_1\triangleright  \tilde G_2 \triangleright \ldots \triangleright  \tilde G_{n}
\triangleright  \tilde G_{n+1}=\{1\}$$
with abelian factors, and all factors of the series of $\tilde G$, except for the last factor, are torsion-free.
\end{theorem}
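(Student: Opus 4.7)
My plan is to prove the theorem by induction on the length $n$ of the series. The statement as written is not quite strong enough to carry the induction, because the natural inductive step passes through the quotient $Q=G/G_n$, whose series has length $n-1$ with \emph{every} factor torsion-free. To exploit this I would prove in parallel the stronger variant (A) ``if all $n$ factors of the series of $G$ are torsion-free abelian, then $\tilde G$ can be chosen with all $n$ of its factors torsion-free abelian,'' alongside the statement as written, call it (B). Both are handled by the same induction, and by \cite[Proposition~2]{KMR23} it suffices to treat \emph{finite} non-singular systems.

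The base case $n=1$ is elementary: $G$ is abelian and in additive notation the system is a linear system $M\vec x=\vec g$ with $M\in\mathrm{Mat}_{k\times k}(\mathbb Z)$ of non-zero determinant, so embedding $G$ into its divisible hull (the torsion-free divisible hull $G\otimes\mathbb Q$ for (A)) makes $M$ invertible and solves the system.

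For the inductive step, set $A=G_n$ and $Q=G/A$, and apply (A) at length $n-1$ to the projection of the system to $Q$; this produces an overgroup $\tilde Q\supset Q$ with a torsion-free subnormal series of length $n-1$ together with a solution $(\bar x_i)\in\tilde Q^m$ of the projected system. Next, enlarge $A$ to a $\mathbb Z[\tilde Q]$-module $\tilde A\supset A$ via induced modules, $\tilde A=A\otimes_{\mathbb Z[Q]}\mathbb Z[\tilde Q]$; this embeds $A$ because $\mathbb Z[\tilde Q]$ is free as a right $\mathbb Z[Q]$-module on right coset representatives of $Q$ in $\tilde Q$. By Shapiro's lemma, the class $[c]\in H^2(Q,A)$ representing $G$ corresponds to a class in $H^2(\tilde Q,\tilde A)$; the associated extension $\tilde G$ of $\tilde Q$ by $\tilde A$ contains $G$ and has a natural subnormal series of length $n$ whose first $n-1$ factors are those of $\tilde Q$ (hence torsion-free) and whose last factor is the abelian $\tilde A$. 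To finish, seek lifts $x_i=\bar x_i\,a_i$ with $a_i\in\tilde A$; because $\tilde A$ is abelian, expanding $w_j(\bar x_i a_i)$ yields in additive notation the $\mathbb Z[\tilde Q]$-linear system
$$J\vec a=-\vec w,\qquad \vec w=(w_j(\bar x_i))_j\in\tilde A^k,$$
whose coefficient matrix $J=(\partial_i w_j)$ is the Fox Jacobian evaluated in $\mathbb Z[\tilde Q]$. Under the augmentation $\mathbb Z[\tilde Q]\to\mathbb Z$, $J$ specializes to the non-singular integer exponent sum matrix, so $\det J$ is a non-zero-divisor in $\mathbb Z[\tilde Q]$ (which is a domain since $\tilde Q$ is torsion-free locally indicable). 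A last enlargement of $\tilde A$---by tensoring with a suitable localization of $\mathbb Z[\tilde Q]$ inverting $\det J$, or for (A) simply by tensoring with $\mathbb Q$---makes $J$ invertible and gives the desired $a_i$.

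The main obstacle is precisely this last enlargement: one must make $\tilde A$ large enough that $J$ is invertible on it while preserving abelianness of the bottom level, the embedding $G\hookrightarrow\tilde G$, and torsion-freeness of the upper $n-1$ factors of $\tilde G$. The strengthened induction hypothesis (A) secures the torsion-free condition on the upper factors, which coincide with those of $\tilde Q$; without (A) the last factor of $\tilde Q$ could carry torsion, which would then contaminate the $(n-1)$st factor of $\tilde G$ and violate the conclusion. A secondary delicate point is ensuring that the localization construction yielding the final $\tilde A$ remains an abelian $\mathbb Z[\tilde Q]$-module on which the inflated cocycle still defines an extension containing $G$---checking this requires exploiting standard ring-theoretic properties of $\mathbb Z[\tilde Q]$ for $\tilde Q$ torsion-free with torsion-free abelian series factors.
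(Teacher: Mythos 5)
Your route is genuinely different from the paper's. The paper (following its Main Theorem, which Remark~\ref{strongrem} explains how to adapt to the torsion-free/$\mathbb Q$ setting) peels off the \emph{top} factor: it embeds $G$ into $G_2\mathbin{\bar\wr} B_1$ via Kaloujnine--Krasner, enlarges $B_1$ to its divisible hull and $G_2$ by induction, and then proves (Lemma~\ref{Lemma2}) that a non-singular system over $\hat H\mathbin{\bar\wr}\hat B$ transforms into a non-singular system over $\hat H$ alone; the only ``linear algebra'' needed is the independence of rows over the group algebra of the \emph{torsion-free} factor $\hat B$ (Lemma~\ref{Lemma1}). All torsion is thereby quarantined in the base case, where divisibility of the abelian group suffices. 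You instead peel off the \emph{bottom} factor $G_n$, pass to the quotient, inflate the extension class, and linearize via Fox derivatives. Your strengthening (A) of the induction hypothesis is a reasonable and necessary move for your decomposition, and the base case and the reduction to finite systems are fine.

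However, there are two genuine gaps. First, Shapiro's lemma in cohomology identifies $H^2(Q,A)$ with $H^2(\tilde Q,\operatorname{Coind}_Q^{\tilde Q}A)=H^2\bigl(\tilde Q,\operatorname{Hom}_{\mathbb ZQ}(\mathbb Z\tilde Q,A)\bigr)$, not with $H^2$ of the induced module $A\otimes_{\mathbb ZQ}\mathbb Z\tilde Q$; since $[\tilde Q:Q]$ is infinite these differ, and it is the coinduced (Cartesian-power-type) module that yields an extension of $\tilde Q$ containing $G$ --- this is exactly why the paper works with the \emph{Cartesian} wreath product. Second, and more seriously, the step you yourself call ``the main obstacle'' is the actual crux of the theorem and is not carried out: when $A=G_n$ has torsion, localizing $\tilde A$ at $\det J$ need not embed $\tilde A$. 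A minimal example: $G=\langle t\rangle_\infty\times\langle c\rangle_2$ with the equation $x^2c=1$; the projected solution is $\bar x=1$, the Jacobian is $J=(2)$, and $\tilde A[\tfrac12]=0$ kills $A=\langle c\rangle_2$ entirely, whereas the correct enlargement is the divisible hull $\mathbb Q/\mathbb Z$, in which $2a=-c$ is solvable even though multiplication by $2$ is not injective. So the right statement to prove is that a matrix over $\mathbb Z\tilde Q$ with non-singular augmentation acts \emph{surjectively} on a suitable coinduced module with divisible base --- which is, in disguise, precisely what the paper's Lemma~\ref{Lemma1} plus the inductive application of Lemma~\ref{Lemma2} establish. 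Without that argument your proof is incomplete exactly where the difficulty lies.
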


Section \ref{mainsection} of the present article
contains the proof to a similar result for $p$-nonsingular systems.

\begin{maintheorem}
Let $p$ be a prime number.
Let a group $G$ have a subnormal series
$$
G=G_1 \triangleright G_2 \triangleright  \ldots \triangleright G_n \triangleright G_{n+1} =\{1\}
$$
with factors
$B_1, B_2,\dots,B_n$, where $B_i=G_i/G_{i+1}$ are abelian groups
and
all the factors $B_i$ except
$B_n$ are $p'$-torsion-free.

Then there is a group $\hat G \supset G$ with a subnormal series
$$
\hat G=\hat G_1 \triangleright \hat G_2 \triangleright \ldots \triangleright \hat G_n \triangleright \hat G_{n+1} =\{1\}
$$
with the same properties
(i.e. its length is $n$, factors are abelian and all factors
except the last one are $p'$-torsion-free)
such that any $p$-nonsingular system of equations over $G$
has a solution in $\hat G$.
Moreover, every $p$-nonsingular system over $\hat G$
is solvable in $\hat G$ itself.
\end{maintheorem}
Arguments of Section \ref{mainsection} allow us to strengthen
the theorem from \cite{KMR24}.

Lemma \ref{Lemma1} from Section \ref{ringsection}
plays an important role in the proof of the main theorem.
Also, the method of the proof is based on methods that were developed in
\cite{NT22,KM23,KMR24}.

In Section \ref{countersection} the main theorem helps us to
prove the following fact.

\begin{prop}\label{counterprop}
The metabelian group
$
\begin{pmatrix}
1 & \mathbb Z_7 \\
0 & (\mathbb Z_7)^* \\
\end{pmatrix}
$
of order $42$ from \cite{KMR24} is a minimal (by order)
example of a metabelian group over which there exists
a unimodular equation that has no
solutions in metabelian groups.
\end{prop}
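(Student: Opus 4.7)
The $42$-element half of the statement is already the cited proposition of \cite{KMR23}; the new content is that no metabelian group of smaller order admits a unimodular equation without a metabelian solution. I will establish this by reducing, for every metabelian $G$ with $1 \le |G| < 42$, to the main theorem with $n = 2$. Specifically, the plan is to find a prime $p$ and an abelian normal subgroup $A \triangleleft G$ with $G' \le A$ and $G/A$ an abelian $p$-group; since any finite abelian $p$-group is $p'$-torsion-free, the series $G \triangleright A \triangleright \{1\}$ then satisfies the hypotheses of the main theorem and embeds $G$ into a metabelian $\hat G$ in which every $p$-nonsingular system over $G$ is solvable. A unimodular equation is $p$-nonsingular for every prime $p$, so this yields the desired metabelian solution group.

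The first step is to reformulate the existence of $A$ as a single internal condition on $G$. Let $K$ denote the kernel of the conjugation action $G/G' \to \mathrm{Aut}(G')$ and put $Q = (G/G')/K$. For any $A$ with $G' \le A \le G$, the subgroup $A$ is abelian iff $A/G' \le K$, and $G/A = (G/G')/(A/G')$ is a $p$-group iff $A/G'$ contains every $q$-Sylow subgroup of the finite abelian group $G/G'$ for $q \ne p$. Taking $A$ to be the preimage of $K$ gives $G/A \cong Q$, so the required $A$ exists (for some $p$) exactly when $Q$ is a prime-power group.

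The remaining step is a short case analysis by $|G'|$, using that $Q$ is abelian (being a quotient of the abelian group $G/G'$) and that $|Q|$ divides both $|G/G'|$ and $|\mathrm{Aut}(G')|$. For $|G'|\in\{2,3,4,5,6\}$, $\mathrm{Aut}(G')$ either is a $p$-group or, in the case $G'\cong V_4$ where $\mathrm{Aut}(G') \cong S_3$, has only prime-power abelian subgroups; for $|G'| = 7$, $\mathrm{Aut}(\mathbb Z_7) = \mathbb Z_6$, so a non-$p$-power image $Q$ would have to equal the whole $\mathbb Z_6$, forcing $6 \mid |G/G'|$ and hence $|G| \ge 42$; for $|G'| \ge 8$, the bound $|G/G'| \le \lfloor 41/|G'| \rfloor$ is small enough that, combined with the structure of $\mathrm{Aut}(G')$, it automatically forces $Q$ to be a prime power. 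The principal point is the case $|G'|=7$, which is exactly where the extremal $42$-element example lives and sharpens the bound; in every other case the arithmetic is slack, and the verification is routine.
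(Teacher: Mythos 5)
Your overall strategy is the same as the paper's: reduce to the main theorem with $n=2$ by exhibiting, for each metabelian $G$ of order at most $41$, an abelian normal subgroup $A\supseteq G'$ such that $G/A$ is an abelian $p$-group. The gap is in your reformulation. The equivalence ``$A$ is abelian iff $A/G'\le K$'' fails in the direction you need: $A/G'\le K$ only says that $G'\le Z(A)$, and combined with $A'\le G'$ (which holds because $A/G'$ is abelian) this yields $A'\le Z(A)$, i.e.\ $A$ is nilpotent of class at most $2$ --- not abelian. Consequently ``take $A$ to be the preimage of $K$'' does not produce the required subgroup. A concrete failure inside your range: $G=\langle a\rangle_3\times D_4$ has order $24$ and is metabelian, $G'$ is central of order $2$, so $K$ is all of $G/G'$ and the preimage of $K$ is the nonabelian group $G$ itself; your criterion computes $Q=\{1\}$, declares it a prime power, and stops, having produced no abelian $A$. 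The same happens for $\langle a\rangle_3\times Q_8$ and, more generally, whenever $G'$ is central and $G$ is nonabelian.

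The conclusion you want is still true for such groups --- for $\langle a\rangle_3\times D_4$ the preimage of the $\langle a\rangle_3$ factor of $G/G'$ is abelian of order $6$ with quotient an abelian $2$-group --- but reaching it requires extra arguments of exactly the kind the paper supplies: in the order-$24$, index-$12$ case it proves the chosen $A$ abelian via ``$A/Z(A)$ cyclic implies $A$ abelian'', and in the order-$40$ case it explicitly replaces the kernel by a smaller abelian normal subgroup when the kernel turns out to be all of $Q_8$ or $D_4$. So the ``only if'' half of your equivalence and your arithmetic on $\lvert\operatorname{Aut}(G')\rvert$ are fine, but the ``if'' half --- the half the whole proof rests on --- must be repaired: either choose $A$ more carefully inside the preimage of $K$ and verify abelianness case by case, or dispose of prime-power orders separately via nilpotency (Shmel'kin's theorem), as the paper does with its lemma on groups of order $p^k$.
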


Some open questions on systems of equations
over solvable groups
are formulated in Section \ref{questsection}.

The author thanks the Theoretical Physics and Mathematics
Advancement Foundation
``BASIS''.
The author also thanks Anton A. Klyachko
for valuable advices and remarks.
The author thanks an anonymous referee,
who, with other useful comments,
suggested a shorter proof for Section
\ref{countersection}.

\section{Images of rows of group ring elements}\label{ringsection}

\begin{prop}\label{ringprop}
Let $R$ be an associative ring with unity of characteristic $p$.
Let $M \in R[x]/(x^{p^k}-1)$ be an element such that
$f(M)$ is not a left (right) zero divisor, where
$f\colon R[x]/(x^{p^k}-1) \to R$ is the natural ring homomorphism with
$f(x)=1.$
Then $M$ itself is not a left (right) zero divisor.
\end{prop}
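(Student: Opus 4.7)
The plan is to exploit the fact that in characteristic $p$ the Frobenius identity gives $(x-1)^{p^k}=x^{p^k}-1$ in $R[x]$. Substituting $y=x-1$ therefore yields a ring isomorphism
$$R[x]/(x^{p^k}-1)\;\xrightarrow{\sim}\;R[y]/(y^{p^k}),$$
under which the homomorphism $f$ becomes simply the projection onto the constant term (since $f(x)=1$ corresponds to $y\mapsto 0$). So the proposition is reduced to the following purely formal statement about truncated polynomial rings: in $R[y]/(y^{p^k})$, an element $M=\sum_{i=0}^{p^k-1}a_iy^i$ is not a left (resp.\ right) zero divisor whenever its constant term $a_0$ is not a left (resp.\ right) zero divisor in $R$.

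Next, I would prove this truncated-polynomial statement by a direct induction on coefficient degree. Suppose $MN=0$ where $N=\sum_{j=0}^{p^k-1}b_jy^j$, and assume $a_0$ is not a left zero divisor. Expand the product (noting that $y$ commutes with scalars) and examine the coefficient of $y^m$ for $m=0,1,2,\dots,p^k-1$ in turn. At stage $m$, the inductive hypothesis $b_0=\cdots=b_{m-1}=0$ collapses the coefficient $\sum_{i+j=m}a_ib_j$ down to $a_0b_m$, and the assumption on $a_0$ forces $b_m=0$. So $N=0$, proving that $M$ is not a left zero divisor. The right-hand version is identical with the order of multiplication reversed and the induction on $a_0$ on the right.

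There is no real obstacle here: the only substantive observation is the Frobenius identity $(x-1)^{p^k}=x^{p^k}-1$, which is precisely what makes the characteristic-$p$ hypothesis essential and allows the change of variable that turns $f$ into the constant-term functional. Everything after that is a bookkeeping induction on the coefficients of $y$, and the proof would read in just a few lines.
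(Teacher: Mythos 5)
Your proposal is correct and is essentially the paper's own argument: the paper likewise expands $M$ in powers of $(x-1)$, uses $(x-1)^{p^k}=x^{p^k}-1=0$ in characteristic $p$, identifies the constant term with $f(M)$, and kills the coefficients of a hypothetical annihilator one degree at a time. Your explicit reformulation as an isomorphism with $R[y]/(y^{p^k})$ is only a cosmetic repackaging of the same idea.
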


\begin{proof}
We consider the ``left'' case.

Consider $M$ as a polynomial in $(x-1)$ with coefficients from $R$:
$$
M=M_0+M_1(x-1)+M_2(x-1)^2+\ldots+M_{p^k-1}(x-1)^{p^k-1}.
$$
$(x-1)^{p^k}=x^{p^k}-1=0$ because the characteristic of $R$ is $p$.
By the definition of $f$, $M_0$ is equal to $f(M)$,
hence, $M_0$ is not a left zero divisor.

Now assume that $M$ is a left zero divisor. This means that there is a non-zero polynomial
$$B=B_0+B_1(x-1)+\ldots+B_{p^k-1}(x-1)^{p^k-1}$$
such that $MB=0$.
Consider $MB$ as a polynomial in $(x-1)$ as well.
Look at the constant term of $MB$. On the one hand,
it is equal to $M_0B_0$,
on the other hand, it must be zero.
$M_0$ is not a left zero divisor, therefore, $B_0=0$.
Then look at the linear term of $MB$.
As $B_0$ equals $0$, the linear term is equal to
$M_0B_1(x-1)$, while it must be equal to $0$ as well as the constant term.
And so on. Eventually we get that all the coefficients $B_i$ are equal to $0$.
This contradicts the condition $B\neq 0$, thus, the assumption that $M$
is a left zero divisor is false.

The ``right'' case can be treated similarly.
\end{proof}

\begin{coroll}\label{coroll1}
Let $R$ be an associative ring with unity of characteristic $p$.

Let $M \in R[x_1,\ldots,x_l]/(x_1^{p^{k_1}}-1,\ldots, x_l^{p^{k_l}}-1)$
be an element such that
$f(M)$ is not a left (right) zero divisor, where
$f\colon R[x_1,\ldots,x_l]/(x_1^{p^{k_1}}-1,\ldots, x_l^{p^{k_l}}-1) \to R$
is the natural ring homomorphism with
$f(x_i)=1.$
Then $M$ itself is not a left (right) zero divisor.
\end{coroll}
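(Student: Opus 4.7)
The plan is to deduce Corollary \ref{coroll1} from Proposition \ref{ringprop} by induction on the number $l$ of variables. The base case $l=1$ is literally Proposition \ref{ringprop}, so only the inductive step requires explanation.

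For the inductive step, I would rewrite the ring in an ``Horner-style'' way as a polynomial ring in the last variable over the smaller quotient ring. More precisely, set
$$
S = R[x_1,\ldots,x_{l-1}]/(x_1^{p^{k_1}}-1,\ldots,x_{l-1}^{p^{k_{l-1}}}-1),
$$
and observe that $S$ is again an associative ring with unity of characteristic $p$ (the additive relation $p\cdot 1 = 0$ survives in the quotient because the defining relations are monic in the $x_i$ and do not touch $R$). Then there is a canonical identification
$$
R[x_1,\ldots,x_l]/(x_1^{p^{k_1}}-1,\ldots,x_l^{p^{k_l}}-1) \;\cong\; S[x_l]/(x_l^{p^{k_l}}-1),
$$
and the map $f$ factors as $f = f' \circ g$, where $g\colon S[x_l]/(x_l^{p^{k_l}}-1)\to S$ sends $x_l\mapsto 1$ and $f'\colon S\to R$ is the analogous natural map sending $x_1,\ldots,x_{l-1}$ to $1$.

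Now suppose $f(M)$ is not a left zero divisor in $R$. Since $f(M) = f'(g(M))$, the inductive hypothesis applied to $g(M)\in S$ (where $f'$ plays the role of the map in Corollary \ref{coroll1} for $l-1$ variables) yields that $g(M)$ is not a left zero divisor in $S$. Then Proposition \ref{ringprop}, applied with the ring $S$ in place of $R$ and the single variable $x_l$ of order $p^{k_l}$, gives that $M$ itself is not a left zero divisor in $S[x_l]/(x_l^{p^{k_l}}-1)$, as required. The ``right'' case is completely symmetric.

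There is no real obstacle here; the only minor point to verify is that $S$ has characteristic $p$, so that Proposition \ref{ringprop} actually applies in the inductive step, but this is immediate from the construction of $S$ as a quotient of a polynomial ring over $R$ by a monic ideal.
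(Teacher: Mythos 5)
Your proposal is correct and is essentially the paper's own proof: the paper likewise inducts on the number of variables, identifies the ring as $Q[x_{l+1}]/(x_{l+1}^{p^{k_{l+1}}}-1)$ with $Q=R[x_1,\ldots,x_l]/(x_1^{p^{k_1}}-1,\ldots,x_l^{p^{k_l}}-1)$, applies the inductive hypothesis to the image of $M$ under $x_{l+1}\mapsto 1$, and then invokes Proposition \ref{ringprop} over $Q$. The only differences are cosmetic (indexing the step as $l-1\to l$ rather than $l\to l+1$, and your explicit remark that the characteristic-$p$ hypothesis passes to the quotient, which the paper also notes in one line).
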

\begin{proof}
Let us use induction by $l$.

If $l=1$, then this is exactly Proposition \ref{ringprop}.

Now show that, if the corollary is true for $l$ variables,
then it is true for $l+1$ variables as well.

Note that
$$
R[x_1,\ldots,x_l,x_{+1}]/(x_1^{p^{k_1}}-1,\ldots, x_l^{p^{k_l}}-1,
x_{l+1}^{p^{k_{l+1}}}-1)
=
$$
$$
=
\left(R[x_1,\ldots,x_l]/(x_1^{p^{k_1}}-1,\ldots, x_l^{p^{k_l}}-1)\right)
[x_{l+1}]/(x_{l+1}^{p^{k_{l+1}}}-1).
$$
Denote
$R[x_1,\ldots,x_l]/(x_1^{p^{k_1}}-1,\ldots, x_l^{p^{k_l}}-1)$
as $Q$. The ring $Q$ has the characteristic $p$ as well.
Look at the image $h(M)$ of $M$ under the natural homomorphism
$h\colon Q[x_{l+1}]/(x_{l+1}^{p^{k_{l+1}}}-1) \to Q$, $h(x_{l+1})=1$.
This image is not a left (right) zero divisor by induction.
Then, by Proposition \ref{ringprop}, the element $M$
is also not a left (right) zero divisor.
\end{proof}

Now let $P$ be a finitely generated abelian $p$-group.
It can be represented as
$P= \langle g_1\rangle_{p^{k_1}}\times\ldots\times \langle g_n\rangle_{p^{k_l}}$.
Then the group algebra $\mathbb Z_p P$ can be represented as
$\mathbb Z_p P=\mathbb Z_p [x_1,\ldots,x_l]/(x_1^{p^{k_1}}-1,\ldots, x_l^{p^{k_l}}-1)$.
And the ring of $n\times n$ matrices over $\mathbb Z_p P$
is
$M_n(\mathbb Z_p) [x_1,\ldots,x_l]/(x_1^{p^{k_1}}-1,\ldots, x_l^{p^{k_l}}-1)$.
Knowing that a non-singular
matrix over a field is not a zero divisor, we get the following fact.

\begin{coroll}\label{coroll2}
Let $M\in M_n(\mathbb Z_p P)$
be an $n \times n$ matrix over the group algebra, where $P$
is a finitely generated abelian $p$-group.
Suppose that the image of $M$ under the natural ring homomorphism
$\varepsilon\colon M_n(\mathbb Z_p P) \to M_n(\mathbb Z_p)$,
which sends the elements of the group $P$ to the unity of $\mathbb Z_p$,
is a non-singular matrix.

Then $M$ is not a zero divisor.
In particular, the rows of $M$ are independent over $\mathbb Z_p P$.
\end{coroll}

The statement of Corollary
\ref{coroll2}
is also true for $P\times A$,
where $P$ and $A$ are finitely generated abelian groups,
$P$ is a $p$-group and $A$ is torsion-free.

\begin{coroll}\label{coroll3}
Let
$M\in M_n(\mathbb Z_p \left(P\times A\right))$
be an $n \times n$ matrix over the group algebra, where 
$P$ is a finitely generated abelian $p$-group and
$A$ is a finitely generated torsion-free abelian group.
Suppose that the image of $M$ under the natural ring homomorphism
$\varepsilon\colon M_n(\mathbb Z_p \left(P\times A\right)) \to M_n(\mathbb Z_p)$,
which sends the elements of $P\times A$ to the unity of $\mathbb Z_p$,
is a non-singular matrix.

Then $M$ is not a zero divisor.
In particular, the rows of $M$ are independent over $\mathbb Z_p \left(P\times A\right)$.
\end{coroll}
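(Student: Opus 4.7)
The plan is to reduce to Corollary \ref{coroll2} by approximating the torsion-free factor $A$ by its finite $p$-quotients $A/p^k A$, and then arguing that these approximations together separate elements of $\mathbb Z_p(P\times A)$.

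First I would fix an isomorphism $A\cong\mathbb Z^m$ (since $A$ is a finitely generated torsion-free abelian group), so that for every $k\geq 1$ the quotient $A/p^k A\cong (\mathbb Z/p^k)^m$ is a finite abelian $p$-group, and hence $P\times A/p^k A$ is a finitely generated abelian $p$-group. The surjection $A\twoheadrightarrow A/p^k A$ induces a ring homomorphism
$$\pi_k\colon M_n\bigl(\mathbb Z_p(P\times A)\bigr)\longrightarrow M_n\bigl(\mathbb Z_p(P\times A/p^k A)\bigr).$$
Composing $\pi_k$ with the natural augmentation of $M_n\bigl(\mathbb Z_p(P\times A/p^k A)\bigr)$ down to $M_n(\mathbb Z_p)$ recovers exactly the map $\varepsilon$ of the statement. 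Therefore the image $\pi_k(M)$ satisfies the hypothesis of Corollary \ref{coroll2}, so $\pi_k(M)$ is not a zero divisor.

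Next, suppose for contradiction that there exists a nonzero $N\in M_n\bigl(\mathbb Z_p(P\times A)\bigr)$ with $MN=0$ (the right case is symmetric). Then $\pi_k(M)\pi_k(N)=0$, and by the previous step this forces $\pi_k(N)=0$ for every $k$. It now suffices to show that $\bigcap_{k\geq 1}\ker\pi_k=\{0\}$: then $N=0$, a contradiction.

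For the intersection step, I would observe that $N$ has only finitely many nonzero entries, each of which is a finite $\mathbb Z_p$-linear combination of distinct elements of $P\times A$. So there is a finite set of elements $(p_i,a_i)\in P\times A$ appearing in $N$. For any two distinct pairs $(p_i,a_i)\neq(p_j,a_j)$, either $p_i\neq p_j$ (in which case they remain distinct under $\pi_k$ for every $k$) or $a_i\neq a_j$, and in the latter case $a_i-a_j$ is a nonzero element of the free abelian group $A\cong\mathbb Z^m$, hence $a_i-a_j\notin p^k A$ for all sufficiently large $k$. Choosing $k$ large enough to separate all finitely many such pairs, we see that $\pi_k$ is injective on the $\mathbb Z_p$-span of the support of $N$, so $\pi_k(N)\neq 0$, contradicting the previous paragraph.

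The only mildly delicate point is this separation-of-supports argument; everything else is a direct reduction to Corollary \ref{coroll2}. The independence of the rows of $M$ over $\mathbb Z_p(P\times A)$ follows at once, exactly as in Corollary \ref{coroll2}: a nontrivial left combination of rows equal to $0$ would give a nonzero right zero divisor of $M$.
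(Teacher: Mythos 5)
Your proof is correct, but it takes a genuinely different route from the paper's. The paper first pushes $M$ forward to $M_n(\mathbb Z_p A)$ by killing only $P$, notes that $\mathbb Z_p A$ is an integral domain embeddable in its field of fractions so that ``not a zero divisor'' becomes ``nonzero determinant'' (which follows since the determinant survives the further augmentation to $\mathbb Z_p$), and then handles the $P$-part by applying Corollary \ref{coroll1} with the noncommutative coefficient ring $R=M_n(\mathbb Z_p A)$. You instead keep $P$ and kill $A$ only approximately: you reduce $A\cong\mathbb Z^m$ modulo $p^k$, so that $P\times A/p^kA$ is a finite abelian $p$-group and Corollary \ref{coroll2} applies to each $\pi_k(M)$, and you finish with a residual-finiteness argument --- a purported annihilator $N$ has finite support in $P\times A$, and for $k$ large the map $\pi_k$ separates that support (distinct $P$-components stay distinct, and a nonzero difference $a_i-a_j\in\mathbb Z^m$ lies outside $p^kA$ for large $k$), so $\pi_k(N)\neq 0$, contradicting $\pi_k(M)\pi_k(N)=0$. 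Both arguments are sound. Yours buys a proof that never leaves the finite-$p$-group world of Corollary \ref{coroll2} and avoids fraction fields and determinants over $\mathbb Z_p A$, at the price of the (correctly handled) support-separation step; the paper's version buys a cleaner passage to the infinitely generated case of Corollary \ref{coroll4} and illustrates that Proposition \ref{ringprop} was deliberately stated for an arbitrary, possibly noncommutative, coefficient ring precisely so it could absorb $M_n(\mathbb Z_p A)$. One cosmetic point: in your last sentence a vanishing nontrivial combination of rows yields a nonzero matrix $B$ with $BM=0$, i.e.\ it exhibits $M$ as a right zero divisor ($B$ itself being a left one); the conclusion is unaffected.
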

\begin{proof}
Look at the image of $M$ under the ring homomorphism
$f\colon M_n(\mathbb Z_p \left(P\times A\right)) \to M_n(\mathbb Z_p A)$,
which sends the elements of $P$ to the unity of $\mathbb Z_p$
and does not change the elements of $A$.
The matrix $f(M)$ is not a zero divisor.

Indeed, $\mathbb Z_p A$
can be embedded into its field of fractions,
as $\mathbb Z_p A$ is an integral domain.
Therefore, the matrix $f(M)$
not being a zero divisor
is equivalent to the non-singularity of $f(M)$
over the field of fractions,
i.e. to it having non-zero determinant.
And the determinant of $f(M)$ is not equal to zero,
as by the condition the determinant of $\varepsilon(M)=h\left(f(M)\right)$
is not equal to zero, where $h\colon M_n(\mathbb Z_p A) \to M_n(\mathbb Z_p)$ 
is the natural ring homomorphism which
sends the elements of $A$ to the unity of $\mathbb Z_p$.

Futher note that
$M_n(\mathbb Z_p \left(P\times A\right))$
is naturally isomorphic to
$M_n(\mathbb Z_p A) [x_1,\ldots,x_l]/(x_1^{p^{k_1}}-1,\ldots, x_l^{p^{k_l}}-1)$.
Then, by Corollary \ref{coroll1}, in which
$R$ is $M_n(\mathbb Z_p A)$,
we get that $M$ is not a zero divisor as
$f(M)$ is not a zero divisor.
\end{proof}

The structure $P\times A$ (where $P$ and $A$ are from Corollary \ref{coroll3})
is the structure of every finitely generated $p'$-torsion-free abelian group.
We can now go from the finitely generated case to the infintely generated one.

\begin{coroll}\label{coroll4}
Let $M\in M_n(\mathbb Z_p D)$
be an $n \times n$ matrix over the group algebra, where 
$D$ is a $p'$-torsion-free abelian group.
Suppose that the image of $M$ under the natural ring homomorphism
$\varepsilon\colon M_n(\mathbb Z_p D) \to M_n(\mathbb Z_p)$
is a non-singular matrix.

Then $M$ is not a zero divisor.
In particular, the rows of $M$ are independent over $\mathbb Z_p D$.
\end{coroll}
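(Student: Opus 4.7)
The plan is to reduce Corollary \ref{coroll4} to the finitely generated case already established in Corollary \ref{coroll3} via a standard finite-support argument. Suppose, for contradiction, that $M$ is a zero divisor in $M_n(\mathbb Z_p D)$, say $MB=0$ for some non-zero $B \in M_n(\mathbb Z_p D)$ (the other side is symmetric). Each entry of $M$ and of $B$ is a $\mathbb Z_p$-linear combination of finitely many elements of $D$, so the set $S$ of all group elements occurring with non-zero coefficient in any entry of $M$ or $B$ is finite. I would let $D_0 = \langle S \rangle \le D$, which is a finitely generated subgroup of $D$.

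Both $M$ and $B$ then lie in $M_n(\mathbb Z_p D_0)$, and the inclusion $D_0 \hookrightarrow D$ induces a ring embedding $\mathbb Z_p D_0 \hookrightarrow \mathbb Z_p D$, hence an embedding $M_n(\mathbb Z_p D_0) \hookrightarrow M_n(\mathbb Z_p D)$. Therefore the identity $MB=0$ actually holds in $M_n(\mathbb Z_p D_0)$. Since $D$ is $p'$-torsion-free, so is the subgroup $D_0$; by the structure theorem for finitely generated abelian groups $D_0 \cong P \times A$, with $P$ a finite abelian $p$-group and $A$ a finitely generated torsion-free abelian group, which is precisely the setting of Corollary \ref{coroll3}. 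The map $\varepsilon$, restricted to $M_n(\mathbb Z_p D_0)$, is exactly the augmentation-type homomorphism that sends elements of $P\times A$ to $1$, so the hypothesis that $\varepsilon(M)$ is non-singular carries over verbatim. Corollary \ref{coroll3} then says that $M$ is not a zero divisor in $M_n(\mathbb Z_p D_0)$, contradicting $B \neq 0$.

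I do not anticipate a real obstacle here: the substantive algebra was done in Corollaries \ref{coroll1}--\ref{coroll3}, and what remains is the routine observation that any zero-divisor relation has finite support together with the classification of finitely generated abelian groups. The only small compatibility check is that $\varepsilon$ intertwines with the embedding $\mathbb Z_p D_0 \hookrightarrow \mathbb Z_p D$, which is immediate from the definition of the augmentation on group algebras.
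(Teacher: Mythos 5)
Your proof is correct and follows essentially the same route as the paper: pass to the finitely generated subgroup generated by the finite support of $M$ and $B$, note it is $p'$-torsion-free of the form $P\times A$, and invoke Corollary \ref{coroll3}. No gaps.
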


\begin{proof}
Assume that this is not true.
It means that there is a non-zero matrix $B$
over $\mathbb Z_p D$ such that
either $BM=0$ or $MB=0$.

Look at all the elements of  $B$ and $M$.
There is a finite number of these elements.
Each of them contains a finite number of the elements of $D$
with a non-zero coefficient.
Hence, there is only a finite number of the
elements of $D$ which are contained in these matrices
with a non-zero coefficient.

Consider the subgroup $H$ generated by those elements.
It is a finitely generated $p'$-torsion-free abelian group.
The elements of $B$ and $M$
are contained in the corresponding subalgebra $\mathbb Z_p H$.
Therefore, we get that $M$ is a zero divisor
in a matrix ring over the group algebra of a
finitely generated $p'$-torsion-free abelian group,
which is prohibited by Corollary \ref{coroll3}.

It means that the assumption of $M$ being a zero divisor is false.
\end{proof}

Corollary \ref{coroll4}
can be generalized to the infinite rows case.
\begin{lemma} \label{Lemma1}
Let $D$ be a $p'$-torsion-free abelian group
and
$\{m_i\}_{i\in I}$ be elements of a free module
over
$\mathbb Z_p D$ (i.e. finitely supported
rows of elements of this group algebra).
Consider the natural ring homomorphism
$f \colon \mathbb Z_p D \to \mathbb Z_p$
which sends the elements of the group $D$ to
the unity of $\mathbb Z_p$.
This mapping naturally defines
a mapping from the free module over $\mathbb Z_p D$
to the vector space over $\mathbb Z_p$.

Now let the system of rows $\{f(m_i)\}_{i\in I}$ be
linearly independent over $\mathbb Z_p$.
Then the system $\{m_i\}_{i\in I}$ is independent over $\mathbb Z_p D$.
\end{lemma}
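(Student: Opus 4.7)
The plan is to lift Corollary \ref{coroll4}, which concerns finite square matrices, to arbitrary (possibly infinite) families of rows in a free $\mathbb Z_p D$-module. I would argue by contradiction, reducing any putative relation to a finite square matrix whose $\varepsilon$-image is non-singular, whereupon Corollary \ref{coroll4} applies verbatim.

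Suppose there is a non-trivial relation $\sum_{i\in I}\alpha_i m_i = 0$ with $\alpha_i \in \mathbb Z_p D$. Since only finitely many $\alpha_i$ are non-zero, pass to the corresponding finite subset $I_0 \subset I$. The rows $\{m_i\}_{i\in I_0}$ lie in a free module with some basis, and each $m_i$ has finite support in that basis; hence the union of the supports over $i \in I_0$ is contained in a finite set of basis indices $J_0$. Truncating to the coordinates in $J_0$ gives rows $\bar m_i \in (\mathbb Z_p D)^{J_0}$ that still satisfy $\sum_{i\in I_0}\alpha_i \bar m_i = 0$, and still have $f(\bar m_i) = f(m_i)$ (as vectors in $\mathbb Z_p^{J_0}$) linearly independent over $\mathbb Z_p$ by the hypothesis of the lemma.

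The number of rows thus satisfies $|I_0| \le |J_0|$. Complete the linearly independent set $\{f(\bar m_i)\}_{i\in I_0}$ to a basis of $\mathbb Z_p^{J_0}$ by appending vectors $v_1, \dots, v_{|J_0|-|I_0|}$ with entries in $\mathbb Z_p$; viewed through the inclusion $\mathbb Z_p \hookrightarrow \mathbb Z_p D$, each $v_j$ is also a row in $(\mathbb Z_p D)^{J_0}$. Assemble the square matrix $M \in M_{|J_0|}(\mathbb Z_p D)$ whose rows are $\{\bar m_i\}_{i\in I_0}$ followed by $\{v_j\}$. Because every $v_j$ is fixed by $\varepsilon$, the image $\varepsilon(M)$ has rows $\{f(\bar m_i)\}$ followed by $\{v_j\}$, which form a basis of $\mathbb Z_p^{J_0}$; so $\varepsilon(M)$ is non-singular.

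Now Corollary \ref{coroll4} implies that $M$ is not a zero divisor in $M_{|J_0|}(\mathbb Z_p D)$, whence its rows are linearly independent over $\mathbb Z_p D$. This contradicts the relation $\sum_{i\in I_0}\alpha_i\bar m_i = 0$ (which would express the zero combination of all rows of $M$ with non-trivial coefficients on the first $|I_0|$ rows). There is no real obstacle; the only point requiring attention is to complete the basis using vectors with entries in $\mathbb Z_p$ itself, so that non-singularity of $\varepsilon(M)$ is automatic and Corollary \ref{coroll4} is immediately applicable.
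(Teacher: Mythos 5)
Your proof is correct and follows essentially the same route as the paper: reduce a putative relation to finitely many rows and finitely many coordinates, build a square matrix whose image under $\varepsilon$ is non-singular, and invoke Corollary \ref{coroll4}. The only (immaterial) difference is that the paper obtains the square matrix by selecting $n$ columns forming a non-singular submatrix of the $f$-image, whereas you pad the row set with scalar rows completing $\{f(\bar m_i)\}$ to a basis of $\mathbb Z_p^{J_0}$; both reductions work equally well.
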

\begin{proof}
Assume that $\{m_i\}_{i\in I}$ is dependent over $\mathbb Z_p D$.
It means that some finite subsystem of these rows is dependent.
Without loss of generality, suppose that this subsystem consists
of $m_1, m_2,\ldots, m_n$.

By the condition of the lemma $f(m_1),f(m_2),\ldots f(m_n)$
are linearly independent over $\mathbb Z_p$.
So if we write the rows $f(m_1),f(m_2),\ldots f(m_n)$ one above another
and remove all the zero columns (after this only
a finite number of columns remains),
we get a matrix with linearly independent rows.
In particular, this matrix has a non-singular square submatrix of size $n\times n$.

Now consider the corresponding submatrix for $m_1,m_2,\ldots m_n$.
By Corollary \ref{coroll4} the rows of this submatrix are independent over
$\mathbb Z_p D$.
So the rows $m_1,m_2,\ldots m_n$ are
independent over $\mathbb Z_p D$.
We get a contradiction, thus, the assumption
of the dependence of $\{m_i\}_{i\in I}$
is false.
\end{proof}
\section{The proof of the main theorem}\label{mainsection}
Let us use induction by $n$.

If $n=1$ (i.e. the group $G$ is abelian), then the divisible hull of $G$
can be taken as $\hat G$.
Indeed, suppose that
$\{w_j=1\}_{j \in J}$ is a non-singular (even not necessarily $p$-nonsingular)
system of equations over a divisible
abelian group $H$ 
in variables $\{x_i\}_{i \in I}$.

The group $H$ is naturally embedded into
$$
R=\left( H \times \left(\underset{i\in I}{\times}\langle x_i \rangle_\infty\right) \right)
\biggl/\Bigl\langle \{w_j\}_{j\in J}\Bigr\rangle.
$$

Indeed, to show that the natural mapping is
injective it suffices to show that
$H \cap \Bigl\langle \{w_j\}_{j\in J}\Bigr\rangle=\{1\}$ in
$H \times \left(\underset{i\in I}{\times}\langle x_i \rangle_\infty\right)$.
Consider an element
$w_j \in H \times \left(\underset{i\in I}{\times}\langle x_i \rangle_\infty\right)$
and its component in
$\underset{i\in I}{\times}\langle x_i \rangle_\infty$.
The group $\underset{i\in I}{\times}\langle x_i \rangle_\infty$
can be rewritten in additive notation to obtain
the $\mathbb Z$-module $\displaystyle \sum\limits_{i \in I}\mathbb Z \cdot x_i$.
The component of $w_j$ in this notation is equal to $m_j$ ---
the image of $w_j$ under trivialization.
Then
$\left(\underset{i\in I}{\times}\langle x_i \rangle_\infty\right)$-component
of an arbitrary element
$w_{j_1}^{n_1}\ldots w_{j_s}^{n_s}$ from
$\Bigl\langle \{w_j\}_{j\in J}\Bigr\rangle$
in additive notation is
$n_1m_{j_1} + \ldots + n_s m_{j_s}$.
If element $w_{j_1}^{n_1}\ldots w_{j_s}^{n_s}$ lies in $H$,
then its
$\left(\underset{i\in I}{\times}\langle x_i \rangle_\infty\right)$-component
is $1$ in multiplicative notation, i. e. $0$ in additive notation.
However, as the system $\{w_j=1\}$ is non-singular,
$m_j$ are independent over $\mathbb Z$,
hence $n_1m_{j_1} + \ldots + n_s m_{j_s}=0$
only if $n_1=\ldots=n_s=0$.
It means that, if the element
$w_{j_1}^{n_1}\ldots w_{j_s}^{n_s}$ lies in $H$,
then it must be equal to $1$ (in multiplicative notation).
This shows that
$H \cap \Bigl\langle \{w_j\}_{j\in J}\Bigr\rangle=\{1\}$
in
$H \times \left(\underset{i\in I}{\times}\langle x_i \rangle_\infty\right)$
and also shows the injectivity of the natural mapping
$H\to R$.

Clearly, $R$ contains a solution $\{x_i\}_{i \in I}$ of $\{w_j=1\}$.
Since $H$ is divisible, it is a direct factor of $R$.
The image of
$\{x_i\}_{i \in I}$ under the projection onto $H$ 
forms a solution of the system in $H$.

Now assume that $n>1$.
Embed $G$ into the Cartesian
wreath product
$G_2 \bar \wr B_1$
by the Kaloujnine-Krasner theorem
\cite{KK51}.
After this we can embed $B_1$ into its divisible hull $\hat B_1$
and embed $G_2$ (by induction)
into a group $\tilde G_2$ with a required subnormal series
of length $n-1$ and such that every $p$-nonsingular 
system of equations over $\tilde G_2$ has a solution in $\tilde G_2$
itself.
Thus, we embed $G_2 \bar \wr B_1$ into
the wreath product $\tilde G_2 \bar \wr \hat B_1$.
Note that the divisible hull $\hat B_1$ of
the $p'$-torsion-free abelian group $B_1$
is $p'$-torsion-free as well
(to understand this, it suffices to consider the torsion-free component
and the $p$-primary component
of $\hat B_1$ and to see that the projection of $\hat B_1$ onto the product of
these two components is injective on $B_1\subset \hat B_1$).

Now we need the following lemma.

\begin{lemma}\label{Lemma2}
Suppose that $\hat H$ is a group such that
every $p$-nonsingular system over it
has a solution in $\hat H$ itself.
Also suppose that $\hat B$ is a divisible $p'$-torsion-free abelian group.

Then every $p$-nonsingular system of equations over
the Cartesian wreath product
$\hat H \bar \wr \hat B=\left(\prod\limits_{b \in \hat B}\hat H_b\right)\leftthreetimes \hat B$
has a solution in $\hat H \bar \wr \hat B$ itself.
\end{lemma}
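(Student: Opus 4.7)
\emph{Plan.} The strategy is to reduce a $p$-nonsingular system $\{w_j=1\}_{j\in J}$ over $W=\hat H\bar\wr\hat B$ to a $p$-nonsingular system over $\hat H$, in two stages, with the main obstacle being the verification of $p$-nonsingularity of the reduced system. First, project the system through the natural quotient $\pi\colon W\twoheadrightarrow\hat B$: the projected equations are linear (abelian) with the original exponent matrix, which is $p$-nonsingular and hence has $\mathbb{Q}$-linearly independent rows. Since $\hat B$ is divisible $p'$-torsion-free abelian, the case $n=1$ of the main theorem (already handled in the excerpt) provides elements $b_i\in\hat B$ satisfying the projected system. Write $x_i=a_i\tilde b_i$, where $\tilde b_i\in\hat B\subset W$ is the canonical lift of $b_i$ and $a_i=(a_{i,c})_{c\in\hat B}\in\prod_{c\in\hat B}\hat H_c$ is a new unknown; substituting into $w_j$ and pushing every $\tilde b$ to the right, $w_j$ becomes $u_j\cdot\tilde d_j$, where $u_j$ lies in the base and $\tilde d_j=\pi(w_j)|_{x_i\mapsto b_i}=1$ by construction, so $w_j=1$ reduces to the base equation $u_j=1$.

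Second, the base element $u_j$ is a product of conjugates $a_{i_k}^{\epsilon_k\tilde c_{j,k}}$ (with $\tilde c_{j,k}\in\hat B$ the $\hat B$-prefix of $w_j$ up to the $k$-th variable occurrence) together with constants from $\prod_c\hat H_c$. Since $\bigl(a_i^{\tilde c'}\bigr)_c=a_{i,c\tilde c'^{-1}}$, projecting $u_j=1$ onto each coordinate $c\in\hat B$ gives an equation in $\hat H$ that involves only finitely many of the $a_{i,c'}$. The resulting new system over $\hat H$ has variables $\{a_{i,c'}\}_{i,c'}$ and equations indexed by $(j,c)\in J\times\hat B$. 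Once this system is shown to be $p$-nonsingular, the hypothesis on $\hat H$ yields a solution $\{a_{i,c'}\}\subset\hat H$, which assembles into $a_i\in\prod_{c'}\hat H_{c'}$ and then $x_i=a_i\tilde b_i\in W$ solving the original system.

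The main difficulty is the verification that the new system is $p$-nonsingular. Organize the occurrence data of the original system into a matrix $N$ over $\mathbb{Z}\hat B$ with entries $N_{j,i}=\sum_{k\,:\,i_k=i}\epsilon_k\tilde c_{j,k}$. Its augmentation $\mathbb{Z}\hat B\to\mathbb{Z}$ recovers the original $p$-nonsingular exponent matrix, so by Lemma \ref{Lemma1} (applied with $D=\hat B$, which is $p'$-torsion-free abelian) the rows of $N\bmod p$ are $\mathbb{Z}_p\hat B$-independent. The exponent sum of $a_{i,d}$ in the $(j,c)$-equation turns out to equal the coefficient of $d^{-1}c$ in $N_{j,i}$, so a putative $\mathbb{Z}_p$-dependence $\sum_{j,c}\lambda_{j,c}\cdot(\text{row}_{(j,c)})\equiv0\pmod p$ rewrites, via the inversion automorphism of the abelian group ring $\mathbb{Z}_p\hat B$, as $\sum_j\mu_j N_{j,i}=0$ in $\mathbb{Z}_p\hat B$ for every $i$, where $\mu_j=\sum_c\lambda_{j,c}c^{-1}$. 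Independence of the rows of $N$ then forces each $\mu_j=0$ and hence all $\lambda_{j,c}=0$, giving the required $p$-nonsingularity; recognizing the new infinite exponent matrix as this ``regular-representation expansion'' of $N\bmod p$ by $\hat B$ is the crux of the argument, after which Lemma \ref{Lemma1} does the heavy lifting.
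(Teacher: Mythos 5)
Your proposal is correct and follows essentially the same route as the paper's proof: solve the projected system in $\hat B$ by divisibility, shift to the base, pass to coordinates indexed by $J\times\hat B$, and verify $p$-nonsingularity by packaging the occurrence data into rows over $\mathbb Z_p\hat B$ whose augmentation is the original exponent matrix, so that Lemma \ref{Lemma1} gives independence. Your matrix $N$ is exactly the paper's rows $m_{j,1}$, and the ``regular-representation expansion'' observation is the paper's identity $m_{j,b}=b\cdot m_{j,1}$.
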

\begin{proof}
Suppose that $\{v_j=1\}_{j\in J}$ is
a $p$-nonsingular system of equations
over $\hat H \bar \wr \hat B$
in variables $\{x_i\}_{i\in I}$.
Consider the image of this system
under the natural mapping of coefficients $\hat H \bar \wr \hat B \to \hat B$.
This image is a $p$-nonsingular system of equations
over the divisible abelian group $\hat B$,
therefore, it has a solution in $\hat B$ itself.

It means that we can change
the variables
in the initial system $\{v_j=1\}_{j\in J}$
over $\hat H \bar \wr \hat B$ such
that all the words $v_j$
become words in the alphabet
$\{x_i^{\pm b}\}_{i\in I, b\in \hat B} \bigsqcup
\prod\limits_{b \in \hat B}\hat H_b$.
From now
we assume that all the words
$v_j$
were originally in this alphabet.
Denote
$\prod\limits_{b \in \hat B}\hat H_b$ by $C$.

Let us introduce new variables
$\{x_{ib}\}_{i\in I, b\in \hat B}$
and rewrite the words 
$v_j$
of coefficients from $C$ and the variables $\{x^{\pm 1}_i\}$
as words $w_j$ of coefficients from $C$ and the variables 
$\{x^{\pm 1}_{ib}\}$ such
that for every $j \in J$
$v_j$ is $w_j$ in which
$x^{\pm 1}_{ib}$ is replaced by
$x_i^{\pm b}$ for all $i\in I$ and $b \in \hat B$.
In other words,
$$
v_j = w_j\left(c_{(j,1)},\ldots, c_{(j,k_j)}, x_{i_{(j,1)}}^{b_{(j,1)}},
\ldots, x_{i_{(j,l_j)}}^{b_{(j,l_j)}}\right).
$$

Search for a solution of $\{v_j=1\}_{j \in J}$ among elements of $C$.
Let $\{\tilde x_i\}_{i\in I}$ be a set of elements from $C$.
By $[\tilde x_i]_b$ denote the coordinate of
$\tilde x_i$, correspoding to
the factor $\hat H_b$.
Note that for every
$d \in \hat B$
$$
[\tilde x_i^{d}]_b = [\tilde x_i]_{bd^{-1}}.
$$

For every $i \in I$ replace $x_i$ in $v_j$
with $\tilde x_i$. Denote the result (which is an element of $C$) by
$\tilde v_j$ and find its coordinates:
$$
[\tilde v_j]_b=
w_j \left( [c_{(j,1)}]_b,\ldots, [c_{(j,k_j)}]_b,
[\tilde x_{i_{(j,1)}}^{b_{(j,1)}}]_b,
\ldots, [\tilde x_{i_{(j,l_j)}}^{b_{(j,l_j)}}]_b \right)=
$$
$$
=w_j \left( [c_{(j,1)}]_b,\ldots, [c_{(j,k_j)}]_b,
[\tilde x_{i_{(j,1)}}]_{bb_{(j,1)}^{-1}},
\ldots, [\tilde x_{i_{(j,l_j)}}]_{bb_{(j,l_j)}^{-1}} \right).
$$

Thereby, finding the solution $\{\tilde x_i\}_{i\in I}$ 
of $\{v_j=1\}_{j\in J}$ in $C=\prod\limits_{b \in \hat B}\hat H_b$
is the same as finding a set $\{[\tilde x_i]_b\}_{i\in I, b\in B}$
of elements of $\hat H$ such that
$$
w_j \left( [c_{(j,1)}]_b,\ldots, [c_{(j,k_j)}]_b,
[\tilde x_{i_{(j,1)}}]_{bb_{(j,1)}^{-1}},
\ldots, [\tilde x_{i_{(j,l_j)}}]_{bb_{(j,l_j)}^{-1}} \right) =1.
$$
for all $j\in J$ and $b \in \hat B$.

In other words, if the system of equations
$$
\left\{
w_j\left([c_{(j,1)}]_b,\ldots, [c_{(j,k_j)}]_b,
y_{i_{(j,1)},bb_{(j,1)}^{-1}},
\ldots,
y_{i_{(j,l_j)},bb_{(j,l_j)}^{-1}}\right)=1 \biggm|
j \in J, b \in \hat B
\right\}
$$
in variables
$\{ y_{i,b} \mid i \in I, b \in \hat B\}$
is solvable in $\hat H$,
then $\{v_j = 1\}$
has a solution consisting of elements of
$C$, hence, it is solvable in $\hat H \bar \wr \hat B$,
which proves the lemma.

Denote
$w_j \left( [c_{(j,1)}]_b,\ldots, [c_{(j,k_j)}]_b,
y_{i_{(j,1)},bb_{(j,1)}^{-1}},
\ldots,
y_{i_{(j,l_j)},bb_{(j,l_j)}^{-1}} \right)$
by $f_{j,b}$.
By the condition on the group $\hat H$,
if the system $\{f_{j,b}=1\}$ is
$p$-nonsingular,
then it is solvable $\hat H$ (which we need to prove).
Let us show this.

Consider $p$-trivializtion.
Let $m_{j,b}$ be the image of $f_{j,b}$ under $p$-trivialization.
There is a $\mathbb Z_p \hat B$-module structure on
$\displaystyle \sum\limits_{i\in I, b \in \hat B} \mathbb Z_p \cdot y_{i,b}$,
which is defined as:
$d \cdot y_{i,b} = y_{i,db}$
for $d \in \hat B$.
Thus, we get a free $\mathbb Z_p \hat B$-module
$\displaystyle \sum\limits_{i \in I}\mathbb Z_p\hat B \cdot y_{i,1}$.
Note that $m_{j,b}=b\cdot m_{j,1}$.

Consider a
linear combination of the rows $m_{j,b}$ which have the same index $j \in J$.
The linear combination $n_1m_{j,b_1} + \ldots + n_sm_{j,b_s}$
is equal to $(n_1b_1+\ldots+n_sb_s)\cdot m_{j,1}$,
i. e. $m_{j,1}$ multiplied by an element of group algebra
$\mathbb Z_p\hat B$.

Now consider a linear combination $\lambda$ of the rows $m_{j,b}$, where $j \in J$
can be different.
It is a sum of linear combinations of the
rows having the same index $j$.
Each of these combinations looks like $(n_1b_1+\ldots+n_tb_t)m_{j,1}$.
So $\lambda$ is a combination
of the rows $m_{j,1}$
with coefficients from
$\mathbb Z_p \hat B$
for some indices $j \in J$.
In particular, the rows $\{m_{j,b}\}_{i \in J, b \in \hat B}$
are linearly independent over $\mathbb Z_p$ if and only if
the rows $\{m_{j,1}\}_{j \in J}$ are independent over $\mathbb Z_p \hat B$.

Consider images of the rows $m_{j,1}$ under the natural ring homomorphism
$\varepsilon\colon \mathbb Z_p \hat B \to \mathbb Z_p$
which sends elements of $\hat B$ to the unity.
The image of the element from the $i$th place in the row $m_{j,1}$
(i. e. the image of the coefficient of $y_{i,1}$ in $m_{j,1}$)
is equal to the sum modulo $p$ of the exponent sums of $y_{i,b}$
in $f_{j,1}$ by all
$b\in\hat B$ while the index $i$ is fixed.
In other words, it is the sum
for all $b\in\hat B$ of exponent sums of $x_{i,b}$
(index $i$ is fixed)
in
$w_j\left([c_{(j,1)}]_1,\ldots, [c_{(j,k_j)}]_1,
y_{i_{(j,1)},b_{(j,1)}^{-1}},
\ldots,
y_{i_{(j,l_j)},b_{(j,l_j)}^{-1}}\right)$
modulo $p$.
And this is the same as the exponent sum of $x_i$ in the word
$v_j = w_j\left(c_{(j,1)},\ldots, c_{(j,k_j)}, x_{i_{(j,1)}}^{b_{(j,1)}},
\ldots, x_{i_{(j,l_j)}}^{b_{(j,l_j)}}\right)$
modulo $p$.
So the image of the row $m_{j,1}$ is the same as
the row of exponents sums of the variables $x_i$
in the word $v_j$ modulo $p$.
In particular, the images of the rows $m_{j,1}$
are linearly independent over $\mathbb Z_p$
as the system $\{v_j=1\}$ is $p$-nonsingular.
Hence, by Lemma \ref{Lemma1},
the rows $m_{j,1}$
are independent over $\mathbb Z_p \hat B$.

Thus,
$\{f_{j,b}=1\}$
is $p$-nonsingular.
It means that this system is solvable in $\hat H$,
which finishes the proof of the lemma.
\end{proof}

After Lemma \ref{Lemma2} is proved,
it can be applied to
$\tilde G_2 \bar \wr \hat B_1$ to get that
any $p$-nonsingular system of equations over this group
is solvable in $\tilde G_2 \bar \wr \hat B_1$ itself.
As this group contains $G$, we can take $\tilde G_2 \bar \wr \hat B_1$ as
$\hat G$.
The theorem's conclusion is true for $\hat G$:
it has a required subnormal series of length $n$
and any $p$-nonsingular system of equations over $\hat G$
is solvable in $\hat G$ itself.

The main theorem is now proved. \qed
\begin{rem}
If in the condition of the main theorem $n$ (the length of subnormal series)
is also equal to the derived length of $G$,
then every $p$-nonsingular system of equations over $G$
has a solution in a solvable group of the same derived length as $G$.
\end{rem}

\begin{rem}\label{strongrem}
An analogue of Lemma \ref{Lemma1}
in the case where $D$ is a torsion-free abelian group
and $\mathbb Z_p$ is replaced by $\mathbb Q$
is true as well.

Therefore, if we repeat the proof of the main theorem,
we can strengthen the result from \cite{KMR24}:
for any group $G$ with a subnormal series having torsion-free abelian factors
(the last factor can have torsion) there is a group $\tilde G \supset G$
with a similar subnormal series of the same length such that $\tilde G$ contains a
solution of every non-singular system of equations over $G$
(and even over $\tilde G$).

In \cite{KMR24} each non-singular system over $G$
could have a different solution group.
\end{rem}

\begin{rem}
An analogous result for two primes $p$ and $q$
(i.e. the main theorem for $\{p,q\}'$-torsion-free groups --- these are
group whose elements' finite orders are not divisible by primes
not from $\{p,q\}$ ---
instead of $p'$-torsion-free ones
and $\{p,q\}$-nonsingular systems ---
these are sumiltaneously $p$-nonsingular and
$q$-nonsingular systems --- instead of $p$-nonsingular ones)
is not true. This is shown by the example of order $42$ from
\cite[Proposition 1a]{KMR24}: the first factor of
this group's subnormal series
is $\{2,3\}'$-torsion-free, the equation over the group
is unimodular, however, the equation
has no solutions in metabelian groups.

Moreover, for every two prime numbers $p$ and $q$ we can construct
an example of a metabelian group with the
$\{p,q\}'$-torsion-free first factor such that
there is a unimodular equation over this group
which has no solutions in metabelian groups.

\begin{example}
Let $p,q$ be two different prime numbers.
Consider the metabelian group
$G=\langle c \rangle_2 \wr (\langle a \rangle_p \times \langle b \rangle_q)$.
Its order is $2^{pq}pq$.
Note that $c c^{ab} = [c,ab]$ belongs to the commutator subgroup of $G$.

As $p$ and $q$ are coprime, there
are integers $n$ and $m$ such that $np+mq=1$.
Then consider the equation
$$x^n\cdot x^{na}\cdot \ldots\cdot x^{na^{p-1}}\cdot
x^m\cdot x^{mb}\cdot\ldots\cdot x^{mb^{q-1}} = c c^{ab}$$
over $G$.
Since $np+mq=1$, this equation is unimodular.
And its right-hand side is an element of the commutator subgroup of $G$.

Assume that there is a metabelian group $\tilde G \supset G$
with a solution $\tilde x \in \tilde G$ of this equation.
After taking the quotient $\tilde G/\tilde G'$ the equality
$$\tilde x^n\cdot \tilde x^{na}\cdot \ldots\cdot \tilde x^{na^{p-1}}\cdot
\tilde x^m\cdot \tilde x^{mb}\cdot\ldots\cdot \tilde x^{mb^{q-1}} = c c^{ab}$$
transforms into $\tilde x = 1$.
It means that $\tilde x$ belongs to the commutator subgroup of $\tilde G$.
As $\tilde G$ is metabelian, this implies that $\tilde x$ commutes with its conjugates.

Using the notation $\tilde x^g \tilde x^h=\tilde x^{g+h}$
for simplicity, we get
$$
\left(\tilde x^n\cdot \tilde x^{na}\cdot \ldots\cdot \tilde x^{na^{p-1}}\cdot
\tilde x^m\cdot \tilde x^{mb}\cdot\ldots\cdot \tilde x^{mb^{q-1}}\right)^{1+ab}=
$$
$$
=\tilde x^{\left(n(1+b)\left(1+a+\ldots+a^{p-1}\right)+m(1+a)\left(1+b+\ldots+b^{q-1}\right) \right)}=
$$
$$
=\left(\tilde x^n\cdot \tilde x^{na}\cdot \ldots\cdot \tilde x^{na^{p-1}}\cdot
\tilde x^m\cdot \tilde x^{mb}\cdot\ldots\cdot \tilde x^{mb^{q-1}}\right)^{a+b}.
$$
However,
$$
\left(c c^{ab}\right)^{1+ab} = c^{1+2ab+a^2b^2}\neq
c^{a+b+a^2b+ab^2}=\left(c c^{ab}\right)^{a+b}.
$$
Thus, we get a contradiction:
some combinations of conjugates to the left-hand side of the equation are equal, though
the same combinations of conjugates to the right-hand side are not.

Therefore, the assumption of finding a solution
in a metabelian group is false.
\end{example}

\end{rem}

\begin{rem}
From the proof of the main theorem we can deduce that
$\hat G_n / \hat G_{n+1}$
is a Cartesian power of the divisible hull of the group $B_n=G_n/G_{n+1}$.
Therefore, if $B_n$ is $p'$-torsion-free (like the other factors
of the series of $G$), then
$\hat G_n / \hat G_{n+1}$
is $p'$-torsion-free as well.

More generally,
$\hat G_n / \hat G_{n+1}$ has the same set of prime orders of elements
as $B_n$.
For example, if $B_n$ is $\langle b \rangle_{30}$,
then $\hat G_n / \hat G_{n+1}$ has elements of prime orders
$2$, $3$ and $5$ and has no elements of prime orders $7$, $13$ or $19$.
\end{rem}

\section{The proof of minimality}\label{countersection}
In this section we call a solvable group,
over which there is a unimodular equation
that has no solutions in solvable groups of the
same derived length, a {\it counterexample}.
``Counterexample'' here means a counterexample to the statement
``any unimodular equation over a solvable group has a solution
in solvable groups with the same derived length''.

The main theorem shows that, if a metabelian group $G$
is an extension of an abelian group
by an abelian $p$-group for some prime $p$,
then any unimodular equation over $G$ has a solution
in some metabelian group, as unimodular equations are $p$-nonsingular
for every prime $p$.
If, moreover, $G$ is abelian, then any unimodular equation
has a solution in $G$ itself.
So in this case $G$ can not be counterexamples.

This helps us to prove that every
metabelian group of order at most $41$ is not a counterexample
and
thus the group from Proposition \ref{counterprop}
is indeed a minimal counterexample by order.

Consider metabelian groups of order at most $41$.

Firstly, note that, if the order of $G$ is $p^k$,
where $p$ is prime, then $G$ is nilpotent,
and unimodular equations over a nilpotent group are
solvable in this group itself by Shmel'kin's theorem \cite{Sh67}.
Hence, groups of order $p^k$ (even not necessarily metabelian)
are not counterexamples.
In our case, these are groups of orders
$1$, $2$, $3$, $4$, 
$5$, $7$,
$8$, $9$,
$11$, $13$,
$16$, $17$,
$19$, 
$23$, $25$, $27$,
$29$, $31$, $32$,
$37$ and $41$.

Further note that a non-abelian group of order $pq$,
where $p<q$ are two distinct primes, is (if it exists)
an extension of $\langle g \rangle_q$ by
$\langle h \rangle_p$,
for example, see \cite{Isa08}.
That is why groups of orders $6$,  
$10$, 
$14$, 
$15$,
$21$,
$22$, $26$,
$33$, $34$, $35$, 
$38$ and $39$ are also not counterexamples.

Also, there is a fact that in a group of order $p^2 q$,
where $p,q$ are distinct primes,
one of Sylow subgroups is normal
(this fact can also be seen in \cite{Isa08}).
It means that this group is either an extension of a group of order
$p^2$ (which is an abelian $p$-group) by
$\langle g \rangle_q$ or
an extension of $\langle g \rangle_q$
by abelian $p$-group.
In both cases this group is not a counterexample.
Thus, orders $12$, $18$, $20$ and $28$
do not need to be considered.

The remaining orders are framed in Table \ref{ordtable}.
These are $24$, $30$, $36$ and $40$.
\begin{table}[h]
\begin{center}
\large
\begin{tabular}{|c|c|c|c|c|c|c|}
\hline
1 &  2 & 3 & 4 &
5 &  6 & 7 \\ \hline
8 & 9 & 10 &
11 & 12 & 13 &14 \\ \hline
15 & 16 & 17 & 18
& 19 &
20 & 21 \\ \hline
22 & 23 & \fbox{24} & 25 & 26 & 27
& 28 \\ \hline
29 & \fbox{30} & 31 & 32
& 33 & 34 & 35 \\ \hline
\fbox{36} & 37 & 38 &  39 & \fbox{40} & 41 & \\ \hline
\end{tabular}
\end{center}
\caption{Considered and unconsidered orders of groups}\label{ordtable}
\end{table}

Suppose that metabelian group $G$
has one of these orders.
Consider a maximal abelian group $H$
containing the commutator subgroup of $G$.
Then $H$ will be normal with the abelian factor $G/H$
(as $H$ contains the commutator subgroup).
Note that the centralizer of $H$ in $G$
is precisely the group $H$, because, if an element $g$
lies in the cetnralizer of $H$ but does not lie in $H$,
then the subgroup $\langle g, H \rangle$
will also be abelian and will strictly contain $H$,
which is a contradiction to the maximality of $H$.
In other words, if we consider the action $G$
on $H$ by conjugation, then we get
an embedding of $G/H$ into $\operatorname{Aut}(H)$.

Now consider each of these orders separately.
\begin{itemize}
\item Order $24$:
$H$ can not have order $1$
or $2$ (as subgroups of these orders are central).
If $H$ has order $3$, $6$, $8$, $12$ or $24$, 
then $G/H$ is an abelian $p$-group.
(or $G$ is abelian itself). In this case
$G$ is not a counterexample.
The remaining case is $|H| = 4$.
In that case $H$ is either $\langle h \rangle_4$
or $V_4$.
Hence, $\operatorname{Aut}(H)$ is either
$\langle a \rangle_2$ or $S_3$.
As $G/H$ embeds into $\operatorname{Aut}(H)$
and has the order $6$, $\operatorname{Aut}(H)\cong S_3$.
But $G/H$ is a cyclic group of order $6$
(as it is abelian) and can not be embedded into $S_3$.
That is why the case $|H|=4$ is impossible.

We get that, if $|G| = 24$,
then $G$ is not a counterexample.
\item Order $30$:
again, $H$ can not have order
$1$ or $2$.
If $H$ is of order
$6$, $10$, $15$, then $G/H$ is an abelian $p$-group
(or $G$ is abelian itself).
If $|H|=3$ or $5$,
then $\operatorname{Aut}(H)$ has order either
$2$ or $4$, and $G/H$
(which has order $10$ or $6$)
can not be embedded into $\operatorname{Aut}(H)$.
So $|H|$ can not be $3$ or $5$.

Therefore, if $|G|=30$,
then $G$ is also not a counterexample.
\item Order $36$:
If $H$ has order
$4$, $9$, $12$, $18$ or $36$,
then $G/H$ is an abelian $p$-group
(or $G$ is abelian itself).
And $H$ can not have order
$1$, $2$ (as subgroups of these orders are central),
$3$ (because then $|\operatorname{Aut}(H)| = 2$ and $|G/H| = 18$)
or $6$ (because then $|\operatorname{Aut}(H)| = 2$
and $|G/H| = 6$).

Hence, in this case
$G$ is not a counterexample.
\item Order $40$:
If $H$ has order $5$, $8$, $10$, $20$ or $40$,
then $G/H$ is an abelian $p$-group
(or $G$ is abelian itself).
And
$H$ can not have order
$1$, $2$ (as subgroups of these orders are central)
or $4$ (because then $|\operatorname{Aut}(H)|=2$
or $6$, while $|G/H| = 10$).

So in this case $G$ is also not a counterexample.
\end{itemize}

As a result, a metabelian group of order at most $41$
can not be a counterexample. With the existence of a counterexample
of order $42$
we get the proof of Proposition \ref{counterprop}. \qed

\section{Open questions}\label{questsection}

\begin{quest}
Is there any example of a solvable group of derived length
greater than $2$ over which not every unimodular equation
has a solution in solvable groups of the same derived length?
\end{quest}

\begin{quest}
In particular, is there an example of a solvable group of order less than $42$
but of derived length greater than $2$, over which not every unimodular equation
has a solution in solvable groups of the same derived length?

For example, is $S_4$ (which is a group of order $24$ and derived length $3$)
such an example?
This group does not have a subnormal series satisfying the condition of the main theorem.
\end{quest}

\begin{quest}
Does every unimodular equation over the group from Proposition \ref{counterprop}
have a solution in a solvable group?
\end{quest}


\begin{thebibliography}{20}
\bibitem[GR62]{GR62}
M. Gerstenhaber, O. S. Rothaus,
The solution of sets of equations in groups,
{Proc. Nat. Acad. Sci. USA}, {48:9} (1962), 1531-1533.
\bibitem[How81]{How81}
J. Howie,
On pairs of 2-complexes and systems of equations over groups,
{J. Reine Angew Math.}, 1981:324 (1981), 165-174.
\bibitem[Isa08]{Isa08}
I. Martin Isaacs,
Finite Group Theory,
Graduate Studies in Mathematics,
Volume 92 (2008)
\bibitem[K93]{K93}
A. A. Klyachko,
A funny property of sphere and equations over groups,
Communications in Algebra, 21:7 (1993), 2555-2575.
\bibitem[K06]{K06}
A. A. Klyachko,
How to generalize known results on equations over groups,
Math. Notes, 79:3 (2006), 409-419.
See also arXiv:math.GR/0406382
\bibitem[KM23]{KM23}
A. A. Klyachko, M. A. Mikheenko,
Yet another Freiheitssatz: Mating finite groups with locally indicable ones,
Glasgow Mathematical Journal, 65:2 (2023), 337-344.
See also arXiv:2204.01122
\bibitem[KMR24]{KMR24}
A. A. Klyachko, M. A. Mikheenko, V. A. Roman'kov,
Equations over solvable groups,
Journal of Algebra,
638 (2024),
739-750,
ISSN 0021-8693,
https://doi.org/10.1016/j.jalgebra.2023.10.004.
See also arXiv:2303.13240
\bibitem[KT17]{KT17}
A. Klyachko, A. Thom, New topological methods to solve equations over
groups, Algebr. Geom. Topol. 17 (2017), no. 1, 331–353.
See also arXiv:1509.01376
\bibitem[KK51]{KK51}
M. Krasner, L. Kaloujnine,
Produit complet des groupes de permutations et le
probl\`eme d'extension de groupes. III,
Acta Sci. Math., 14 (1951), 69-82.
\bibitem[Kr85]{Kr85}
S. Krsti\'c, 
Systems of equations over locally $p$-indicable groups,
Inventiones mathematicae, 81 (1985), 373–378
\bibitem[NT22]{NT22}
M. Nitsche, A. Thom,
Universal solvability of group equations,
Journal of Group Theory, 25:1 (2022), 1-10.
See also arXiv:1811.07737
\bibitem[P08]{P08}
V. G. Pestov,
Hyperlinear and sofic groups: A brief guide,
Bull. Symb. Log., 14:4 (2008), 449-480.
See also arXiv:0804.3968
\bibitem[Sh67]{Sh67}
A. L. Shmel'kin,
Complete nilpotent groups (in Russian),
Algebra i logika Seminar, 6:2 (1967), 111-114.
\bibitem[T18]{T18}
A. Thom,
Finitary approximations of groups and their applications,
Proceedings of the International Congress of Mathematicians ---
Rio de Janeiro 2018. Vol. III.
Invited lectures, World Scientific, Hackensack (2018), 1779-1799.
See also arXiv:1712.01052
\end{thebibliography}
\end{document}